\newcommand{\arxiv}{}
\title{On Phases of Unique Sink Orientations}
\author{Michaela Borzechowski}{Department of Mathematics and Computer Science, Freie Universität Berlin, Germany}{michaela.borzechowski@fu-berlin.de}{}{German Research Foundation DFG within the Research Training Group GRK~2434 \emph{Facets of Complexity}}
\author{Simon Weber}{Department of Computer Science, ETH Zürich, Switzerland}{simon.weber@inf.ethz.ch}{https://orcid.org/0000-0003-1901-3621}{Swiss National Science Foundation under project no. 204320.}
\authorrunning{M. Borzechowski and S. Weber} 
\keywords{Unique Sink Orientation, phase, reconfiguration, PSPACE-completeness} 
\definecolor{orange}{rgb}{0.898, 0.621, 0.0}
\definecolor{skyblue}{rgb}{0.336, 0.703, 0.910}
\definecolor{green}{rgb}{0, 0.617, 0.449}
\definecolor{yellow}{rgb}{0.937, 0.890, 0.258}
\definecolor{blue}{rgb}{0, 0.445, 0.695}
\definecolor{red}{rgb}{0.832, 0.367, 0}
\definecolor{purple}{rgb}{0.797, 0.473, 0.652}
\newcommand{\f}[1]{\relax\ifmmode#1\else{$#1$}\fi}
\newcommand{\szabo}{Szab{\'o}}
\newcommand{\SWC}{\szabo-Welzl condition\xspace}
\newcommand{\xor}{\oplus}
\newcommand{\PSPACE}{\ensuremath{\mathsf{PSPACE}}\xspace}
\newcommand{\NPSPACE}{\ensuremath{\mathsf{NPSPACE}}\xspace}
\newcommand{\flip}{\otimes}
\newcommand{\neig}{\ominus}
\newcommand{\Phase}{\f{P}\xspace}
\newcommand{\Matching}{\f{H}\xspace}
\newcommand{\neighborhoodGraph}{\f{N}\xspace}
\newcommand{\orientation}{\f{O}\xspace}
\newcommand{\dimension}{\f{n}\xspace}
\newcommand{\Cube}{\f{Q_\dimension}\xspace}
\newcommand{\face}{\f{f}\xspace}
\newcommand{\facet}{\f{F}\xspace}
\newcommand{\dimOfFace}{\f{dim(f)}\xspace}
\newcommand{\node}{\f{v}\xspace}
\newcommand{\nodeA}{\f{v}\xspace}
\newcommand{\nodeB}{\f{w}\xspace}
\newcommand{\edge}{\f{e}\xspace}
\newcommand{\edgeA}{\f{e}\xspace}
\newcommand{\edgeB}{\f{e'}\xspace}
\renewcommand{\P}{\mathcal{P}}
\newtheorem{fact}[theorem]{Fact}
\begin{document}

\maketitle

\begin{abstract}
    A \emph{unique sink orientation} (USO) is an orientation of the $n$-dimensional hypercube graph such that every non-empty face contains a unique sink. Schurr showed that given any $n$-dimensional USO and any dimension~$i$, the set of edges $E_i$ in that dimension can be decomposed into equivalence classes (so-called \emph{phases}), such that flipping the orientation of a subset $S$ of $E_i$ yields another USO if and only if $S$ is a union of a set of these phases. In this paper we prove various results on the structure of phases. Using these results, we show that all phases can be computed in $O(3^n)$ time, significantly improving upon the previously known $O(4^n)$ trivial algorithm. Furthermore, we show that given a boolean circuit of size $poly(n)$ succinctly encoding an $n$-dimensional (acyclic) USO, it is \PSPACE-complete to determine whether two given edges are in the same phase. The problem is thus equally difficult as determining whether the hypercube orientation encoded by a given circuit is an acyclic USO [Gärtner and Thomas, STACS'15].
\end{abstract}

\ifdefined\arxiv
\else
\clearpage
\fi
\section{Introduction}
A \emph{unique sink orientation} (USO) is an orientation of the $n$-dimensional hypercube graph, such that for each non-empty face of the hypercube, the subgraph induced by the face contains a unique sink, i.e., a unique vertex with no outgoing edges.

USOs were originally proposed by Stickney and Watson~\cite{stickney1978digraph} as a way of modelling the candidate solutions of instances of the \emph{P-matrix Linear Complementarity Problem}~(P\nobreakdash-LCP). After USOs were then forgotten for more than two decades, they were reintroduced and formalized as purely combinatorial objects by \szabo{} and Welzl in 2001~\cite{szabo2001usos}. Since then, USOs have been studied from many angles. Many problems in geometry, game theory, as well as mathematical programming have since been reduced to the problem of finding the global sink of a unique sink orientation~\cite{borzechowski2023degeneracy,fischer2004smallestball,foniok2009pivoting,gaertner2005stochasticgames,gaertner2006lpuso,halman2007stochasticgames,schurr2004phd}. Motivated by this, much research has gone into finding better algorithms for this problem~\cite{gaertner1995subexaop}, however the original algorithms by \szabo{} and Welzl with runtimes exponential in $n$ are still the best-known in the general case.

Much of the existing research on USOs has gone into structural and combinatorial aspects. 
On the $n$-dimensional hypercube, there are $2^{\Theta(2^n\log n)}$ USOs~\cite{matousek2006numberusos}. While this is a large number, it is still dwarfed by the number of all orientations, which is $2^{n2^{n-1}}$. This shows that the USO condition is quite strong, yet still allows for a large variety of orientations. However, this also makes it quite challenging to enumerate USOs or even randomly sample a USO. The naive approach to random sampling --- generating random orientations until finding a USO --- fails since USOs are so sparse among all orientations. A natural approach is thus the one of \emph{reconfiguration}.

In reconfiguration, one defines simple (often ``local'') operations which can be applied to the combinatorial objects under study. 
This defines a so-called \emph{flip graph}, whose vertices represent the combinatorial objects, where neighboring objects can be turned into each other by applying such a simple operation. Enumeration of all objects can then be performed by systematically walking through a spanning tree of the flip graph, e.g., with a technique due to Avis and Fukuda named \emph{reverse-search}~\cite{reversesearch}. If the flip graph is Hamiltonian, we can even find a so-called \emph{Gray code}. Similarly, random sampling among the objects can be performed by simulating a random walk on the flip graph, which is often expressed as a \emph{Markov chain}. Flip graphs have been studied for many types of combinatorial objects, and this study has in many cases contributed to a better understanding of the objects themselves~\cite{permutationLanguages,reconfigurationSurvey}.

On USOs, a natural choice of operation is to flip the orientation of a single edge.
However, it is known that there exist USOs in which no single edge can be flipped without destroying the USO condition~\cite{borzechowski2022construction,schurr2004phd}.
Thus, to guarantee connectedness of the flip graph, we need operations that may flip multiple edges at once.

This problem was intensely studied by Schurr~\cite{schurr2004phd}. In any USO $O$ of the $n$-cube, and any dimension $i\in[n]$, we denote the set of edges of $O$ in dimension $i$ by $E_i$. Schurr proved that we can then decompose $E_i$ into equivalence classes, such that flipping any subset $S\subseteq E_i$ of edges in $O$ yields another USO if and only if $S$ is the union of some of these equivalence classes. Schurr named these equivalence classes within $E_i$ the \emph{$i$-phases} of $O$.

It turns out that phases are very useful for reconfiguration. As the operation we define the following: For any dimension $i\in[n]$, flip the set of edges given by some set of $i$-phases. Schurr showed that with only $2n$ of these operations we can obtain any $n$-dimensional USO from any other. The flip graph is thus connected and has rather low diameter. Furthermore, Schurr showed that the naturally defined Markov chain based on this operation converges to the uniform distribution. However, it is neither known whether the flip graph is Hamiltonian nor how quickly the Markov chain converges.

Since this is the only known connected flip graph for USOs, it is crucial to understand it better. However, to understand this flip graph and the associated Markov chain we must first gain a better understanding of the structure of phases themselves. In this paper, we make
significant progress on that front by presenting several surprising structural properties of phases. We also show some consequences of these properties to algorithmic and complexity-theoretic aspects of the problem of computing phases.

\subsection{Results}
This paper begins with proofs of various structural properties of phases. Specifically, in \Cref{sec:connectedness}, we show that for every phase $P$, the subgraph of the hypercube induced by the endpoints of the edges in $P$ is \emph{connected}. In \Cref{sec:hypervertices}, we prove various results about the relationship between phases and \emph{hypervertices}, i.e., faces in which for every vertex the orientation of the edges leaving the face is the same. In \Cref{sec:SchurrsProposition}, we prove that flipping a \emph{matching} in a USO leads to another USO if and only if the matching is a union of phases. This statement was previously claimed by Schurr~\cite[Proposition 4.9]{schurr2004phd}, however his proof of the ``only if'' direction contained severe mistakes that 
were remarkably difficult to repair, requiring the use of newer results on \emph{pseudo USOs}~\cite{bosshard2017pseudo}.
Finally, in \Cref{sec:dSchurrCube}, we show that to compute the phases of an $n$-dimensional USO by Schurr's method, it is not sufficient to compare only neighboring edges or even only edges of some bounded distance. We construct a family of USOs in which one needs to compare antipodal vertices with each other.

In \Cref{sec:ComputationOfPhases}, we provide an algorithm to compute all phases of a given $n$-dimensional USO using $O(3^n)$ vertex comparisons, improving upon the currently best known method due to Schurr which takes $O(4^n)$ comparisons. This algorithmic improvement is then contrasted by our following main result proven in \Cref{sec:completeness}:
\begin{theorem}
    Given a USO $O$ in succinct circuit representation, and two edges $e,e'$, the problem of deciding whether $e$ and $e'$ are in the same phase is \PSPACE-complete, even if $O$ is guaranteed to be acyclic.
\end{theorem}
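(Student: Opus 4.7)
The plan is to show membership in \PSPACE{} and \PSPACE-hardness separately.

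For membership, I would rely on a local characterization of phases implicit in Schurr's $O(4^\dimension)$ computation: two edges $\edgeA, \edgeB \in E_i$ are in the same phase of \orientation{} if and only if they lie in the same connected component of a graph on $E_i$ whose adjacency relation is decidable in polynomial time given the circuit $C$ encoding \orientation. Since this graph has $2^{\dimension-1}$ nodes and polynomial-time computable edges, deciding the connectivity of \edgeA{} and \edgeB{} is an $(s,t)$-reachability problem on a succinctly described exponential graph. Savitch's theorem then places the problem in \PSPACE.

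For \PSPACE-hardness, the natural approach is a reduction from a \PSPACE-complete problem in the same ``succinct circuit'' world, such as succinct $(s,t)$-reachability, \textsc{TQBF}, or the Gärtner--Thomas problem of recognizing whether a circuit describes an acyclic USO. The idea is that, given an instance $x$ of the source problem, we produce a polynomial-size circuit $C_x$ describing an \emph{acyclic} USO $\orientation_x$, together with two edges $\edgeA_x$ and $\edgeB_x$, such that $\edgeA_x$ and $\edgeB_x$ lie in the same phase of $\orientation_x$ if and only if $x$ is a yes-instance. The construction would use gadgets whose internal phase structure is well-understood; one can leverage the structural results from the preceding sections (phase connectedness in \Cref{sec:connectedness}, the role of hypervertices in \Cref{sec:hypervertices}, and the matching-flip characterization in \Cref{sec:SchurrsProposition}) to analyze how phases propagate through compositions of gadgets, and the construction of \Cref{sec:dSchurrCube} to realize long-range phase dependencies that can transport signals across many dimensions.

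The main obstacle is the acyclic-USO guarantee: the reduction must \emph{always} produce a valid acyclic USO, regardless of $x$, because the theorem's hardness conclusion is promised to hold under that restriction. Unlike Gärtner and Thomas, who work on general circuit-encoded orientations, we cannot ``export'' the hardness of being a USO into the hardness of being phase-equivalent --- the USO property must be hardwired into the construction. The key technical challenge is therefore to design a gadget library whose compositions are \emph{provably} acyclic USOs while still routing the $x$-dependent computation into phase equivalences between distinguished edges. Once such gadgets are in place, the reduction is completed by succinctly stacking polynomially many layers of gadgets to simulate the polynomial-space source computation; the polynomial size of $C_x$ follows because each gadget is described by a constant-size subcircuit parameterized by polynomially many bits of $x$, which fits neatly into the succinct representation of the combined orientation.
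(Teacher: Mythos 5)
Your membership argument is fine and is essentially the paper's: the paper guesses a walk in the direct-in-phaseness graph on $E_i$ (a nondeterministic polynomial-space procedure, since direct-in-phaseness of two given edges is checkable with four outmap evaluations) and invokes Savitch's theorem; your phrasing as succinct $(s,t)$-reachability is the same argument.

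The hardness half, however, has a genuine gap: you have written a plan for a reduction, not a reduction. You correctly identify \textsc{TQBF} as a viable source problem and correctly isolate the central difficulty --- the output must be a valid acyclic USO for \emph{every} input, so the USO property must be hardwired into the gadgets rather than inherited from the source problem --- but the proof consists precisely of exhibiting such gadgets and verifying their properties, and none of that is present. The paper's construction builds, for each suffix $q_{i+1}x_{i+1},\ldots,q_nx_n:\Phi(b_1,\ldots,b_i,x_{i+1},\ldots,x_n)$ of the quantified sentence, an orientation satisfying five inductive invariants (USO, acyclic, combed in dimension $1$, sink/source at the minimum/maximum vertex, and the $1$-edges at the minimum and maximum vertices in phase iff the sentence is true); the truth value is thus carried by a \emph{phase equivalence between two antipodal edges}, not by reachability in any generic sense. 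The $\forall$-gadget glues two copies of one subcube, one copy of the other, and a uniform cube, then flips two carefully chosen flippable edges; the $\exists$-gadget uses six uniform cubes and six flipped edges; the base case is a pair of fixed $3$-dimensional orientations distinguished by whether their extreme $1$-edges are in phase. Verifying that each composition remains an acyclic USO, that the flipped edges are flippable, and --- hardest of all --- that the ``only if'' direction of the phase invariant holds (which in the $\forall$-case requires \Cref{lem:schurrforward} to flip a phase of a subcube inside the larger cube without disturbing the red edges) is the entire content of the proof. Without concrete gadgets and these verifications, the claim that one can ``route the $x$-dependent computation into phase equivalences'' is an assertion of the theorem, not an argument for it. Your suggestion of reducing from the Gärtner--Thomas recognition problem would face exactly the obstacle you name and is not how the paper proceeds.
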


\section{Preliminaries}

\subsection{Hypercubes and Orientations}


The \emph{\dimension-dimensional hypercube graph} \Cube (\emph{\dimension-cube}) is the undirected graph on the vertex set \mbox{$V(\Cube) = \{0,1\}^\dimension$}, where two vertices are connected by an edge if they differ in exactly one coordinate. 
On bitstrings we use \f{\xor} for the bit-wise \emph{xor} operation and \f{\wedge} for the bit-wise \emph{and}. 
For simplicity we write a dimension $i$ in the subcube spanned by two vertices $v, w\in V(\Cube)$ as $i \in v \xor w$, even though it would be more formally correct to use $i \in \{j\;|\;j \in [n], (v\xor w)_j = 1 \}$.
For a bitstring $v\in\{0,1\}^\dimension$ and a number $i\in [\dimension]$, we use the simplified notation $v\neig i = v\xor I_i$, where $I_i$ is the $i$-th standard basis vector. Thus, for a vertex \node and a dimension $i\in [\dimension]$, the vertex $\node \neig i$ is the neighbor of \node which differs from \node in coordinate $i$. 
The edge between \node and $\node\neig i$ is called an \emph{$i$-edge}, or an \emph{edge of dimension \(i\)}. We denote the set of $i$-edges by $E_i$.

\begin{definition}
A \emph{face} of \Cube described by a string $\face \in\{0,1,*\}^\dimension$ is the induced subgraph of $\Cube$ on the vertex set $V(\face):= \{\node \in V(\Cube)\;|\;\forall i \in [\dimension] : \node_i=\face_i \text{ or }\face_i=* \}$.  
We write $\dimOfFace$ for the set of dimensions \emph{spanning} the face, i.e., dimensions $i$ for which $\face_i=*$. The \emph{dimension} of $\face$ is $|\dimOfFace|$.
\end{definition}
\begin{definition}
A face of dimension $\dimension-1$ is called a \emph{facet}. 
The facet described by the string $f$ with $f_i=1$ and $f_j=*$ for $j\neq i$ is called the \emph{upper $i$-facet}. Its opposite facet (described by $f_i=0$, $f_j=*$ for $j\neq i$) is called the \emph{lower $i$-facet}.
\end{definition}

An \emph{orientation} $O$ is described by a function $\orientation: V(\Cube) \rightarrow \{0,1\}^\dimension$ such that for all $v\in V(Q_\dimension)$ and all $i\in [\dimension]$, $O(v)_i\neq O(v\neig i)_i$. This function assigns each vertex an orientation of its incident edges, called the \emph{outmap} of the vertex, where $\orientation(\node)_i=1$ denotes that the $i$-edge incident to vertex \node is outgoing from \node, and $\orientation(\node)_i=0$ denotes an incoming edge.

\subsection{Unique Sink Orientations}
\begin{definition}
An orientation $\orientation$ of \Cube is a \emph{unique sink orientation (USO)} if within each face $\face$ of~$\Cube$, there exists exactly one vertex with no outgoing edges. That is, there is a unique sink in each face with respect to the orientation \orientation.
\end{definition}

\szabo{} and Welzl~\cite{szabo2001usos} provide a useful characterization for USOs:
\begin{lemma}[\szabo{}-Welzl Condition \cite{szabo2001usos}]\label[lemma]{lem:szabowelzl}
An orientation \orientation of \Cube is a USO if and only if for all pairs of distinct vertices $\nodeA, \nodeB \in V(\Cube)$, we have
\[(\nodeA \oplus \nodeB) \wedge (\orientation(\nodeA) \oplus \orientation(\nodeB)) \neq 0^\dimension.\]
\end{lemma}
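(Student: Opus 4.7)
The plan is to prove both directions via the following strengthening: for every face \face of \Cube spanned by a set of dimensions $J \subseteq [\dimension]$, the map $\nodeA \mapsto \orientation(\nodeA)|_J$ is a bijection from $V(\face)$ to $\{0,1\}^J$.

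For the direction ``SW implies USO'', I observe that for any face \face spanned by $J$ and any two distinct $\nodeA, \nodeB \in V(\face)$, we have $\nodeA \xor \nodeB \subseteq J$; the SW condition then supplies some $i \in \nodeA \xor \nodeB \subseteq J$ with $\orientation(\nodeA)_i \neq \orientation(\nodeB)_i$. Thus the restricted outmap is injective on $V(\face)$, and by equal cardinality ($|V(\face)| = 2^{|J|}$) a bijection; the unique preimage of $0^J$ is then the unique sink of \face.

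For the direction ``USO implies SW'', I use the fact that every face of a USO is itself a USO (a direct consequence of the definition) to reduce to showing that in any USO on \Cube, the outmap $V(\Cube) \to \{0,1\}^\dimension$ is a bijection. Applied to the face through \nodeA spanned by $J = \nodeA \xor \nodeB$ (in which \nodeB is antipodal to \nodeA), this bijectivity gives $\orientation(\nodeA)|_J \neq \orientation(\nodeB)|_J$, which is the SW condition. Bijectivity itself reduces to the sub-lemma that flipping all edges in a single dimension $i$ preserves the USO property: granting this, flipping the edges in each dimension where a target pattern $p \in \{0,1\}^\dimension$ is $1$ produces a USO $\orientation'$ satisfying $\orientation'(\nodeA) = \orientation(\nodeA) \xor p$, whose unique global sink is the unique vertex of $\orientation$-outmap $p$.

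The main obstacle is proving this sub-lemma. The argument proceeds by case analysis on a face \face spanned by $J$: if $i \notin J$ then the orientation on \face is unchanged, while if $i \in J$ I partition $V(\face)$ into the two sub-faces of \face, $f^+$ and $f^-$, obtained by fixing the $i$-th coordinate. Each has its own unique sink $s^+, s^-$ by the USO assumption. Since \face itself has a unique sink under $\orientation$, exactly one of $\orientation(s^+)_i, \orientation(s^-)_i$ equals $0$---say $\orientation(s^+)_i = 0$---so that $s^+$ is the sink of \face and $\orientation(s^-)_i = 1$. Flipping all $i$-edges leaves $s^+, s^-$ as the respective sinks of $f^+, f^-$ (since those sub-faces contain no $i$-edges) but swaps the values $\orientation(s^+)_i$ and $\orientation(s^-)_i$, so $s^-$ becomes the unique sink of \face under the flipped orientation. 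The delicate point is this ``exactly one'' argument, which crucially uses the unique-sink property of the original \face.
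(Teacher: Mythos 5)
The paper offers no proof of this lemma; it is imported verbatim from Szab\'o and Welzl~\cite{szabo2001usos}, so there is nothing internal to compare against. Your argument is correct and is essentially the standard proof from that source: the ``if'' direction via injectivity (hence bijectivity) of the outmap restricted to the spanning dimensions of each face, and the ``only if'' direction via the sub-lemma that reorienting all edges of a single dimension preserves the USO property, which forces the face-restricted outmap to be a bijection; the delicate ``exactly one of $\orientation(s^+)_i,\orientation(s^-)_i$ is $0$'' step is handled correctly using uniqueness of the sink of \face.
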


This also implies that the function $O$ must be a bijection, even when the domain is restricted to any face $f$ and the codomain is restricted to the dimensions spanned by $f$.

\subsection{Pseudo USOs}
Bosshard and Gärtner \cite{bosshard2017pseudo} introduced the concept of \emph{pseudo USOs} to capture orientations that are \emph{almost} USOs.
\begin{definition}
A \emph{pseudo unique sink orientation} (PUSO) of \Cube is an orientation \orientation that does not have a unique sink, but every proper face $\face \neq \Cube$ has a unique sink.
\end{definition}
Every orientation that is not a USO must contain a minimal face that does not have a unique sink. This minimal face must then be a PUSO. We use this fact in several of our proofs, in conjunction with the following property of PUSOs from \cite{bosshard2017pseudo}.
\begin{lemma}[{\cite[Corollary 6 and Lemma 8]{bosshard2017pseudo}}]\label[lemma]{lem:PUSO}
Let \orientation be a PUSO. Then, for every pair of antipodal vertices \nodeA and \nodeB it holds that $O(v)=O(w)$.
\end{lemma}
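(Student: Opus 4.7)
The plan is to combine the \SWC with a simple edge-flipping symmetry of PUSOs. First, I would observe that within any proper face $\face$ of $\orientation$, the \SWC holds: every proper face of a PUSO is itself a USO, so for any $\nodeA, \nodeB \in V(\face)$ the bitstring $(\nodeA\oplus\nodeB) \wedge (\orientation(\nodeA)\oplus\orientation(\nodeB))$ is non-zero when restricted to $\dim(\face)$. Since $\nodeA\oplus\nodeB$ is supported on $\dim(\face)$, this AND is already non-zero as an element of $\{0,1\}^\dimension$. Therefore the \SWC can fail in $\orientation$ only for antipodal pairs, and for such a pair $\nodeA, \nodeB$ (where $\nodeA \oplus \nodeB = 1^\dimension$) the failure is equivalent to $\orientation(\nodeA) = \orientation(\nodeB)$. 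A convenient consequence is that whenever two distinct vertices share an outmap in a PUSO, they must be antipodal.

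Next, for each $x\in\{0,1\}^\dimension$ I would consider the orientation $\orientation_x$ obtained from $\orientation$ by flipping every edge in every dimension $j$ with $x_j = 1$; equivalently, $\orientation_x(\nodeA) := \orientation(\nodeA) \oplus x$. A short calculation gives $\orientation_x(\nodeA) \oplus \orientation_x(\nodeB) = \orientation(\nodeA) \oplus \orientation(\nodeB)$, so the \SWC holds for exactly the same pairs under $\orientation_x$ as under $\orientation$; applying this inside every face of $\Cube$, we see that $\orientation_x$ is again a PUSO. The sinks of $\orientation_x$ are precisely the vertices of $\orientation$ whose outmap equals $x$, and since $\orientation_x$ has no unique global sink, every value $x \in \{0,1\}^\dimension$ is attained by either $0$ or at least $2$ vertices under $\orientation$.

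Combining the two observations, any outmap value shared by two or more vertices in $\orientation$ must be shared by an antipodal pair, and since the antipodal relation pairs each vertex with a unique partner, no value can be shared by three pairwise antipodal vertices. Hence every value is attained exactly $0$ or $2$ times, and the $2^\dimension$ vertices fall into exactly $2^{\dimension-1}$ outmap-equivalence classes of size $2$, each class being an antipodal pair. This matches the total number $2^{\dimension-1}$ of antipodal pairs in $V(\Cube)$, so the outmap-equivalence partition and the antipodal partition coincide, yielding $\orientation(\nodeA) = \orientation(\nodeB)$ for every antipodal pair. The main subtlety I would want to verify carefully is the middle step: that flipping every $j$-edge with $x_j = 1$ preserves PUSO-ness uniformly on every sub-face, so that no proper face of $\orientation_x$ accidentally loses its unique sink and so that the global absence of a unique sink can indeed be read off from the outmap counts of $\orientation$ itself.
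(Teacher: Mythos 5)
Your argument is correct and complete. Note, however, that the paper does not prove this statement at all: it is imported verbatim from Bosshard and G\"artner (their Corollary~6 and Lemma~8), so there is no in-paper proof to compare against. What you have written is a valid self-contained reconstruction of both cited facts: your first step (the \SWC can only fail on antipodal pairs, hence distinct vertices with equal outmaps are antipodal) is exactly their Corollary~6, and the remainder derives their Lemma~8. The two load-bearing points both check out: (i) for $v,w$ in a common proper face $f$, the bitstring $(v\oplus w)\wedge(\orientation(v)\oplus\orientation(w))$ is supported on $\dim(f)$ and coincides with the restricted-outmap test, so USO-ness of $f$ forces the full \SWC for that pair; and (ii) $\orientation_x(v)=\orientation(v)\oplus x$ preserves the validity of the orientation and leaves $\orientation(v)\oplus\orientation(w)$ unchanged, so it sends PUSOs to PUSOs face-uniformly, making the fiber $\orientation^{-1}(x)$ equal to the global sink set of a PUSO and hence of size $0$ or at least $2$. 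Combined with the antipodality of equal-outmap pairs this forces all fibers to have size exactly $0$ or $2$, and the count $2^n=2\cdot 2^{n-1}$ closes the argument. The one stylistic remark: the concluding counting step can be shortened, since once you know that every nonempty fiber is an antipodal pair, the pigeonhole over the $2^{n-1}$ antipodal pairs is immediate; but as written nothing is missing.
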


\subsection{Phases}
Let $O$ be an orientation, and let $S$ be a set of edges. We write $O\flip S$ for the orientation $O$ with the orientation of all edges in $S$ \emph{flipped}, i.e., their orientations are reversed. In general, given a fixed USO $O$, it is quite difficult to characterize which sets of edges $S$ lead to $O\flip S$ being a USO again. In fact, this problem is equivalent to characterizing the set of all USOs. However, the task becomes much easier if we require $S$ to consist of only $i$-edges for some dimension $i$, i.e., $S\subseteq E_i$.

Schurr~\cite{schurr2004phd} called the minimal sets $S\subseteq E_i$ such that $O\flip S$ is a USO \emph{phases.} It turns out that phases form a partition of $E_i$. Furthermore Schurr proved that if $S$ is a union of phases, $O\flip S$ is also a USO.

Formally, the phases are the equivalence classes of the equivalence relation on $E_i$ obtained by taking the transitive closure of the relation of \emph{direct-in-phaseness}:

\begin{definition}\label[definition]{def:DIP}
Let \orientation be an \dimension-dimensional USO and $i \in [\dimension]$ a dimension.
Two $i$-edges $\edgeA, \edgeB \in E_i$ are \emph{in direct phase} if there exist $\nodeA \in \edgeA$ and $\nodeB \in \edgeB$ such that
\begin{equation}\label{eqn:dip}
\orientation(\nodeA)_j = \orientation(\nodeB)_j \text{ for all } j\in (v\xor w)\setminus\{i\}.
\end{equation}
\end{definition}

In other words, $e$ and $e'$ are in direct phase if two of their incident vertices have the same outmap within the subcube they span, apart from the orientation of the $i$-edges.
We can thus see that in this case, $e$ and $e'$ must be oriented in the same direction in \orientation. Flipping just one of the two edges leads to an immediate violation of the \SWC by $v$ and $w$, the orientations $O \flip \{e\}$ and $O\flip\{e'\}$ are not USOs.

Further note that $v$ and $w$ must lie on opposing sides of their respective $i$-edges. However, not both pairs of opposing endpoints of \edgeA and \edgeB need to fulfill \Cref{eqn:dip}. See \cref{fig:onlyOneVertexCertificate} for an example of a USO in which two edges are in direct phase but this fact is only certified by one pair of opposing endpoints.

\begin{figure}[htb]
\centering
\ifdefined\arxiv
\includegraphics[]{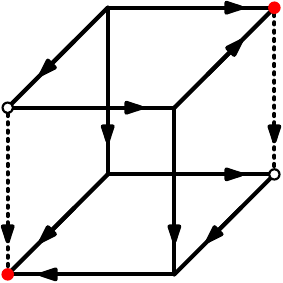}
\else
\includegraphics[scale=0.6]{img/onlyOneVertexCertificate.pdf}
\fi
\caption{The dotted edges are in direct phase. This is certified by the pair of vertices marked in solid red, but not by the vertices circled in black. Note that the phase containing the two dotted edges also contains the front right vertical edge. The back left vertical edge is not in this phase, it is flippable.}
\label{fig:onlyOneVertexCertificate}
\end{figure}

\begin{definition}\label{def:phases}
Let $O$ be a USO. Two edges $e,e'\in E_i$ are \emph{in phase} if there exists a sequence 
\[e=e_0,e_1,\ldots,e_{k-1},e_k=e'\]
such that for all $j\in [k]$, $e_{j-1}$ and $e_j$ are in direct phase.
An \emph{$i$-phase} $P\subseteq E_i$ is a maximal set of edges such that all $e,e'\in P$ are in phase.
We write $\P_i(O)$ for the family of all $i$-phases of $O$.
\end{definition}

On the one hand, since edges that are in direct phase must be flipped together, we can never flip a set $S\subseteq E_i$ that is not a union of $i$-phases. On the other hand, if flipping some set $S\subseteq E_i$ destroys the \SWC for some vertices $v,w$, their incident $i$-edges must have been in direct phase, and exactly one of the two edges is in $S$. Therefore, flipping a union of $i$-phases always preserves the \SWC. We thus get the following observation, which we will strengthen in \Cref{sec:SchurrsProposition}:

\begin{observation}\label{obs:schurrpropositionSingleDim}
Let $O$ be a USO, and $i\in[n]$ a dimension. For any $S\subseteq E_i$, $\orientation \flip S$ is a USO if and only if $S$ is a union of phases, i.e., $S=\bigcup_{P\in \P'}P$ for $\P'\subseteq \P_i(O)$.    
\end{observation}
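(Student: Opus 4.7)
The plan is to formalise the argument sketched just before the observation, hinging on the fact that since $S \subseteq E_i$, flipping $S$ only modifies the $i$-th coordinate of the outmaps, leaving all other coordinates untouched. I prove both directions by contrapositive, tracking when the \SWC (\Cref{lem:szabowelzl}) is broken or preserved by the flip.

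For the ``only if'' direction I assume $S$ is not a union of $i$-phases. Then some $i$-phase $P$ satisfies $\emptyset \neq P \cap S \neq P$. Since by \Cref{def:phases} the phase $P$ is obtained as the transitive closure of the direct-phase relation, I can find edges $\edgeA, \edgeB \in P$ that are directly in phase with $\edgeA \in S$ but $\edgeB \notin S$. \Cref{def:DIP} yields $\nodeA \in \edgeA$ and $\nodeB \in \edgeB$ with $\orientation(\nodeA)_j = \orientation(\nodeB)_j$ for all $j \in (\nodeA \xor \nodeB)\setminus\{i\}$. Since $\edgeA$ and $\edgeB$ are distinct $i$-edges, $i \in \nodeA \xor \nodeB$, and the \SWC applied to $\nodeA, \nodeB$ in the USO $\orientation$ forces $\orientation(\nodeA)_i \neq \orientation(\nodeB)_i$. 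After flipping $S$, exactly one of these two bits is inverted, yielding equality on every coordinate of $\nodeA \xor \nodeB$, hence a \SWC violation witnessing that $\orientation \flip S$ is not a USO.

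For the ``if'' direction I assume $\orientation \flip S$ is not a USO and pick a pair $\nodeA, \nodeB$ violating the \SWC in $\orientation \flip S$. Because $S \subseteq E_i$, the outmaps in coordinates $j \neq i$ are untouched by the flip, so $\orientation(\nodeA)_j = \orientation(\nodeB)_j$ for every $j \in (\nodeA \xor \nodeB) \setminus \{i\}$. If $i \notin \nodeA \xor \nodeB$, this already contradicts \SWC for $\orientation$; otherwise, $\nodeA$ and $\nodeB$ lie on opposite sides of their incident $i$-edges, which therefore satisfy \Cref{def:DIP} and belong to a common $i$-phase. Since $S$ is a union of $i$-phases, these two $i$-edges are either both in $S$ or both outside $S$; in either case the bits $\orientation(\nodeA)_i$ and $\orientation(\nodeB)_i$ are either both flipped or both unchanged, so the truth value of ``$\orientation(\nodeA)_i = \orientation(\nodeB)_i$'' is preserved by the flip. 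The assumed equality of the $i$-th outmap bits in $\orientation \flip S$ therefore transfers to $\orientation$, contradicting \SWC for the USO $\orientation$.

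The main obstacle, though modest, is the parity bookkeeping in the ``if'' direction: one must be sure that ``both in $S$'' and ``both out of $S$'' exhaust the possibilities for the two $i$-edges, which is exactly where the union-of-phases hypothesis enters. Equally important in the ``only if'' direction is to exploit the \emph{transitive} closure in \Cref{def:phases}, so that a non-union-of-phases $S$ actually yields a pair of edges that are \emph{directly} in phase and split by $S$, rather than merely in the same phase through a longer chain; this is what lets a single application of \Cref{def:DIP} produce the violating vertex pair $\nodeA, \nodeB$.
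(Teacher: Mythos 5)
Your proof is correct and takes essentially the same route as the paper, which justifies this observation only by the informal two-sentence argument preceding its statement: a set $S\subseteq E_i$ splitting a directly-in-phase pair creates a \SWC violation, and conversely any \SWC violation created by flipping a subset of $E_i$ must arise from a split directly-in-phase pair; your write-up just makes the connectivity-of-the-transitive-closure and parity bookkeeping explicit. One small nit: in the ``only if'' direction, $i\in \nodeA\xor\nodeB$ does not follow merely from $\edgeA,\edgeB$ being distinct $i$-edges (their certifying endpoints could a priori lie on the same side), but from the fact that otherwise the direct-in-phase equalities would already violate the \SWC for $\nodeA,\nodeB$ in $\orientation$ --- the same \SWC application you invoke to conclude $\orientation(\nodeA)_i\neq\orientation(\nodeB)_i$.
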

Note that this also implies that flipping a union of $i$-phases does not change the $i$-phases.
\ifdefined\arxiv
We further want to note the following:
\begin{observation}
\label{obs:EveryPairOfVerticesCertifiesDIPOfAtMost2Edges}
Every pair of vertices can only certify at most one pair of edges to be in direct phase.
In particular, if two vertices $v$ and $w$ certify that their incident $i$-edges are in direct phase, for all other dimension  $j\in (v\xor w)\setminus\{i\}$, the $j$-edges incident to $v$ and $w$ have opposing orientations and thus cannot be in the same phase.
\end{observation}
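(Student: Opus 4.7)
The plan is to derive both parts of the observation from the \SWC{} applied to the pair $(v,w)$.

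For the first part, suppose $(v,w)$ certifies some pair of $i$-edges to be in direct phase, so that $O(v)_k = O(w)_k$ for all $k \in (v \xor w)\setminus\{i\}$. The \SWC{} (\Cref{lem:szabowelzl}) forces some $k \in v \xor w$ with $O(v)_k \neq O(w)_k$, and since every other coordinate in $v \xor w$ already agrees, this $k$ must equal $i$. Hence $i \in v \xor w$ and $O(v)_i \neq O(w)_i$, so the certified dimension $i$ (and therefore the certified pair of edges) is uniquely determined by $(v,w)$.

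For the second part, fix $j \in (v \xor w)\setminus\{i\}$. The certification gives $O(v)_j = O(w)_j$, while $j \in v \xor w$ forces $v_j \neq w_j$. A short rewrite shows that the $j$-edge incident to a vertex $a$ points ``up'' (toward the higher $j$-coordinate) iff $a_j \xor O(a)_j = 1$; combining the two relations yields $v_j \xor O(v)_j \neq w_j \xor O(w)_j$, so the $j$-edges at $v$ and $w$ are oriented in opposite absolute directions.

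To conclude that two such edges cannot lie in the same $j$-phase, I would prove an auxiliary claim: distinct $j$-edges in direct $j$-phase always point in the same absolute direction. Indeed, if $p \in f_1, q \in f_2$ is a certifying pair, then the argument of the first part, now applied in dimension $j$, forces $j \in p \xor q$ and $O(p)_j \neq O(q)_j$; the up/down criterion above then shows $f_1$ and $f_2$ agree. Taking the transitive closure of the direct-$j$-phase relation extends the property to entire phases, so opposite absolute directions preclude being in the same $j$-phase. The main obstacle is this auxiliary claim, but once the setup is in place it reduces to another short \SWC{}-based bit computation.
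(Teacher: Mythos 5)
Your proof is correct and follows essentially the same reasoning the paper relies on (the observation is stated without a separate proof, but the remarks after \Cref{def:DIP} already note that the \SWC forces the disagreement to occur exactly in dimension $i$ and that edges in direct phase are oriented the same way). Both the uniqueness of the certified dimension and the ``same absolute direction within a phase'' claim are exactly the intended bit computations, so nothing is missing.
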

\fi


A special case is a phase $P=\{e\}$, i.e., a phase that consists of only a single edge. We call such an edge \emph{flippable}. See \Cref{fig:onlyOneVertexCertificate} again for an example of a flippable edge. Schurr provides the following characterization of flippable edges:

\begin{lemma}[{\cite[Lemma 4.13]{schurr2004phd}}]\label{lem:flippable}
    An edge $\{\node ,\node \neig i\}$ in a USO \orientation is \emph{flippable} if and only if
    $\node$ and $\node \neig i$ have the same outmap apart from their connecting $i$-edge, i.e., 
\[\forall j \in [\dimension] \setminus \{i\} : \orientation(\node)_j=\orientation(\node \neig i)_j.\]
\end{lemma}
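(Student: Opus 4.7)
The plan is to characterize flippability via the notion of direct phase from \Cref{def:DIP}: an $i$-edge $e = \{v, v \neig i\}$ is flippable precisely when it is in direct phase with no other $i$-edge. This is immediate from \Cref{def:phases}, since phases are the equivalence classes of the transitive closure of direct-in-phaseness.

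For the forward direction I argue by contrapositive. Assume there exists some $j \in [\dimension]\setminus\{i\}$ with $O(v)_j \neq O(v \neig i)_j$, and look at the $2$-face spanned by $\{i,j\}$ through $v$. I consider the opposite $i$-edge $e' = \{v \neig j,\, v \neig i \neig j\}$ and verify that the pair $(v \neig i, v \neig j)$ certifies $e$ and $e'$ to be in direct phase. Their symmetric difference is $I_i \xor I_j$, so after removing $\{i\}$ only the coordinate $j$ remains, and the required equation from \Cref{eqn:dip} is $O(v \neig i)_j = O(v \neig j)_j$. This follows by combining the hypothesis $O(v)_j \neq O(v \neig i)_j$ with the fact that $j$-neighbors have opposite $j$-outmaps (so $O(v)_j \neq O(v \neig j)_j$). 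Hence $e$ and $e' \neq e$ lie in the same phase, and $e$ is not flippable.

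For the reverse direction assume the hypothesis, and suppose for contradiction that some $i$-edge $e^* = \{u, u \neig i\} \neq e$ is in direct phase with $e$, witnessed by vertices $x \in e$ and $y \in e^*$. Applying the \SWC to $(x,y)$ forces $O(x)$ and $O(y)$ to differ in some coordinate of $x \xor y$; by the direct-phase equations they agree on $(x \xor y)\setminus\{i\}$, so this disagreeing coordinate must be $i$, and in particular $i \in x \xor y$. Since $e \neq e^*$ we have $y \notin e$, so $d' := (x \xor y) \setminus \{i\}$ is nonempty. By the flippability hypothesis, $O(x \neig i)_j = O(x)_j$ for every $j \neq i$, and by the direct-phase relation $O(x)_j = O(y)_j$ for $j \in d'$; together these give $O(x \neig i)_j = O(y)_j$ throughout $d'$. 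But $(x \neig i) \xor y = d'$, so the \SWC applied to $(x \neig i, y)$ demands disagreement in some coordinate of $d'$, a contradiction.

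The main obstacle is the reverse direction: a direct-phase partner $e^*$ of $e$ need not lie in a $2$-face containing $e$ but may be arbitrarily far away on the cube. Overcoming this requires invoking the \SWC inside the potentially high-dimensional face spanned by $x \xor y$ and using the flippability hypothesis simultaneously on all coordinates of $d'$, rather than the purely local $2$-face computation that suffices for the forward direction.
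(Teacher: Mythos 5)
Your proof is correct. Note, however, that the paper never proves \Cref{lem:flippable} itself --- it is imported verbatim from Schurr's thesis with only a citation --- so there is no in-paper argument to measure you against; your write-up is a self-contained replacement. Your forward (contrapositive) direction is in fact identical to the first lemma of \Cref{sec:connectedness}: a disagreeing coordinate $j$ yields a neighboring $i$-edge in the $2$-face spanned by $i$ and $j$ that is in direct phase with $e$, certified by the pair $(\node\neig i,\node\neig j)$. The reverse direction is the part Schurr's citation leaves invisible, and your handling of it is sound: from the \SWC applied to a certifying pair $(x,y)$ you correctly deduce $i\in x\xor y$ and hence that $d'=(x\xor y)\setminus\{i\}$ is nonempty whenever $e^*\neq e$ (distinct $i$-edges are vertex-disjoint, so $y\neq x\neig i$), and then the flippability hypothesis transfers the agreement on $d'$ from $x$ to $x\neig i$, violating the \SWC for the pair $(x\neig i,y)$. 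One point worth making explicit, since the paper defines flippable via phases rather than direct phases: ruling out every direct-phase partner $e^*\neq e$ does suffice to make the equivalence class of $e$ under the transitive closure a singleton (any nontrivial chain from $e$ would start with a direct-phase partner distinct from $e$), so your opening reduction is legitimate. An alternative, equally short route for the reverse direction would be to observe that flipping $e$ changes only the $i$-coordinates of the outmaps of $\node$ and $\node\neig i$, and that under the hypothesis these two vertices are interchanged by the outmap bijection restricted to any face containing $e$, so the \SWC survives the flip; your argument buys essentially the same conclusion while staying entirely inside the direct-in-phaseness formalism the paper actually uses.
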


There exist \dimension-dimensional USO in which every edge is flippable, i.e., USOs that have $\dimension 2^{\dimension-1}$ phases. This happens for example in the \emph{uniform} USO, where for every $i\in [n]$, every $i$-edge is oriented towards the lower $i$-facet. In contrast, it is also possible that all $i$-edges are in phase with each other, and thus there only exists a single $i$-phase. However, this cannot happen in all dimensions simultaneously: By \cite[Lemma 4.14]{schurr2004phd}, for $n>2$, every $n$-dimensional USO has at least $2n$ phases.

\section{Structural Properties}

In this section we show various new insights on the structure of phases.

\subsection{Connectedness of Phases}\label{sec:connectedness}
As a first question, we investigate whether it is possible that a phase consists of multiple ``components'' that are far apart.
We first show that each edge that is not flippable is in direct phase with at least one ``neighboring'' edge.
\begin{lemma}
Let $e=\{v,v\neig i\}$ be an $i$-edge. If $e$ is not flippable, there exists a dimension $j\neq i$, such that $e$ is in direct phase with the \emph{neighboring} edge $\{v\neig j,(v\neig i)\neig j\}$.
\end{lemma}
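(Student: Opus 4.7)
The plan is to apply Schurr's characterization of flippable edges (\Cref{lem:flippable}). Since $e = \{v, v\neig i\}$ is not flippable, there must exist a dimension $j \neq i$ with $\orientation(v)_j \neq \orientation(v \neig i)_j$. I would fix such a $j$ and show that $e$ is in direct phase with the opposite $i$-edge $e' = \{v \neig j, (v \neig i) \neig j\}$ of the $2$-face spanned by dimensions $\{i, j\}$ at $v$.

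To verify the direct-phase condition of \Cref{def:DIP}, I would use the diagonally opposite endpoints $v \in e$ and $(v \neig i) \neig j \in e'$ as the witnessing pair. These two vertices differ in exactly dimensions $i$ and $j$, so the set of coordinates that needs to be checked collapses to $\{j\}$, and the entire direct-phase condition reduces to the single equality
\[\orientation(v)_j = \orientation((v \neig i) \neig j)_j.\]

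This equality follows from a short parity argument inside the $2$-face: since $v \neig i$ and $(v \neig i) \neig j$ are the endpoints of a $j$-edge, we have $\orientation(v \neig i)_j \neq \orientation((v \neig i) \neig j)_j$. Combining this with our assumption $\orientation(v)_j \neq \orientation(v \neig i)_j$ and the fact that outmap coordinates take values in $\{0, 1\}$, we immediately obtain $\orientation(v)_j = \orientation((v \neig i) \neig j)_j$, completing the verification. (Symmetrically, the pair $v \neig i \in e$ and $v \neig j \in e'$ certifies the same direct-phase relation.)

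I do not foresee a serious obstacle: the whole argument lives inside a single $2$-face, and amounts to observing that an $i$-edge witnesses its non-flippability through some neighboring $j$-dimension, at which point the alternating parity of the two $j$-edges around that $2$-face forces the diagonally opposite endpoints of the two $i$-edges to share their $j$-coordinate in the outmap, which is exactly the certificate required by \Cref{def:DIP}.
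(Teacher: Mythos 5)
Your proposal is correct and follows essentially the same route as the paper's proof: invoke \Cref{lem:flippable} to obtain a dimension $j$ where the outmaps of $v$ and $v\neig i$ disagree, and then use the diagonally opposite pair $v$ and $(v\neig i)\neig j$ in the $2$-face spanned by $i$ and $j$ as the certificate for direct-in-phaseness. Your write-up just spells out the parity step more explicitly than the paper does.
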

\begin{proof}
Since $e$ is not flippable, by \Cref{lem:flippable} there exists at least one dimension $j \in [\dimension]\setminus \{i\}$ in which some orientation of \nodeA and $(\nodeA \neig i)$ disagrees, i.e., $\orientation(\nodeA)_j \neq \orientation(\nodeA \neig i)_j$. Thus, in the 2-face spanned by $v$ and $(v\neig i)\neig j$ the $j$-edges are oriented in the opposite direction. These two vertices certify that $e$ and $\{\nodeA \neig j, (\nodeA \neig i)\neig j\}$ are in direct phase.
\end{proof}

However, edges that are in direct phase with each other are not necessarily neighboring, as can be seen for example in \cref{fig:onlyOneVertexCertificate}. 
Nonetheless, we will prove that with respect to this neighboring relation, every phase is \emph{connected}. Let us first define the neighboring relation more formally.

\begin{definition}
\label{def:neighborhoodGraph}
For some $i\in [\dimension]$ of a cube \Cube, we say two $i$-edges are \emph{neighboring} when there exists a 2-face containing both of them. Let $\neighborhoodGraph_i$ be the graph with $V(\neighborhoodGraph_i) = E(\Cube)_i$. There is an edge between two vertices of $\neighborhoodGraph_i$ if the vertices correspond to neighboring $i$-edges in \Cube.
We call $\neighborhoodGraph_i$ the \emph{neighborhood graph} in dimension $i$.
\end{definition}

\begin{theorem}\label{thm:connectedness}
Let \Phase be an $i$-phase of a USO \orientation. Then the subgraph of $N_i$ induced by the edges of \Phase is connected.
\end{theorem}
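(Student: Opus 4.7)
The plan is to argue by contradiction: suppose the subgraph of $\neighborhoodGraph_i$ induced by some phase $\Phase$ is disconnected; split it into two non-empty parts and show that flipping only one of them still produces a USO, contradicting that $\Phase$ is a single phase. Let $P_1$ be a single $\neighborhoodGraph_i$-connected component of $\Phase$ and set $P_2 := \Phase \setminus P_1$, both non-empty by assumption. Since phases partition $E_i$, any union of phases contained in $\Phase$ must equal $\Phase$, so the proper non-empty subset $P_1 \subsetneq \Phase$ is not a union of phases. By \Cref{obs:schurrpropositionSingleDim}, the orientation $\orientation' := \orientation \flip P_1$ is therefore not a USO, and so has some minimal face $C^*$ on which a unique sink fails to exist; by minimality, $\orientation'|_{C^*}$ is a PUSO.

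I would then apply \Cref{lem:PUSO} to $\orientation'|_{C^*}$ to obtain, for every antipodal pair $\alpha, \beta$ of $C^*$, the equality $\orientation'(\alpha) = \orientation'(\beta)$ on the spanning dimensions of $C^*$. Since $\orientation'$ differs from $\orientation$ only on $i$-edges (and only the $i$-coordinate of outmaps can be affected), this equality yields $\orientation(\alpha)_j = \orientation(\beta)_j$ for every spanning dimension $j \neq i$ of $C^*$. The \SWC applied to $\alpha, \beta$ in the USO $\orientation$ then forces $i$ to span $C^*$ as well, with $\orientation(\alpha)_i \neq \orientation(\beta)_i$; rereading the $i$-th coordinate of $\orientation' = \orientation \flip P_1$ shows that exactly one of the $i$-edges $e_\alpha$ at $\alpha$ and $e_\beta$ at $\beta$ lies in $P_1$. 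Hence $\alpha, \beta$ certify that $e_\alpha$ and $e_\beta$ are in direct phase, so both edges lie in $\Phase$, and they split across $P_1$ and $P_2$.

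The finishing step is to consider the subgraph of $\neighborhoodGraph_i$ induced on the $i$-edges of $C^*$. Two $i$-edges of $C^*$ are $\neighborhoodGraph_i$-adjacent precisely when their endpoints differ along a single spanning dimension of $C^* \setminus \{i\}$, so these $2^{(\dim C^*) - 1}$ $i$-edges form a $((\dim C^*) - 1)$-dimensional hypercube, which is connected. Every such $i$-edge belongs to $\Phase$ (as the $i$-edge of a vertex occurring in some antipodal pair of $C^*$), and since every antipodal pair splits across $P_1$ and $P_2$, the induced partition of these edges into $P_1$-edges and $P_2$-edges is non-empty on both sides. Connectedness then forces a $\neighborhoodGraph_i$-edge between a $P_1$-edge and a $P_2$-edge, contradicting that $P_1$ and $P_2$ lie in different $\neighborhoodGraph_i$-connected components of $\Phase$.

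The main obstacle I expect is the careful translation of the PUSO lemma from $\orientation'$ back to $\orientation$: tracking exactly which outmap bits change under the flipping of $P_1 \subseteq E_i$, and then combining this with the \SWC for $\orientation$ to deduce both that $i$ is a spanning dimension of $C^*$ and that exactly one of $e_\alpha, e_\beta$ lies in $P_1$. Once that structural information is in hand, the observation that the $i$-edges of $C^*$ span a smaller hypercube under $\neighborhoodGraph_i$-adjacency is a short combinatorial finish.
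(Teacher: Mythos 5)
Your proposal is correct and follows essentially the same route as the paper's proof: flip one part of a purported disconnection of the phase, locate a minimal PUSO face, apply \Cref{lem:PUSO} to conclude that every antipodal pair of $i$-edges in that face is in direct phase with exactly one edge on each side of the split, and then use the connectivity of the $i$-edges of that face in $\neighborhoodGraph_i$ to find a neighboring crossing pair. You spell out a few steps the paper leaves implicit (that $i$ must span the minimal face, and that the face's $i$-edges form a connected subcube of $\neighborhoodGraph_i$), but the argument is the same.
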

\begin{proof}
Let $C,\overline{C}$ be any non-trivial partition of the edges of $P$.
Let $\orientation'$ be the orientation in which we flip all edges in $C$ but not the edges in $\overline{C}$, i.e., $\orientation' := \orientation \flip C$.
The orientation $\orientation'$ can obviously not be USO, since the edges of $C$ and $\overline{C}$ are in phase.
This means that there is a minimal face $f$ such that $\orientation'$ is a PUSO on $f$. It is easy to see that $f$ must span dimension $i$.
By \Cref{lem:PUSO}, every antipodal pair of vertices in $f$ has the same outmap within the dimensions spanned by $f$, i.e., any such pair of vertices fails the \SWC. Therefore, within $f$, every pair of antipodal $i$-edges is in direct phase, and every such pair consists of exactly one edge in $C$ and one edge in $\overline{C}$.
We can thus see that $f$ must contain at least one $i$-edge in $C$ that is neighboring an $i$-edge in $\overline{C}$. Since this holds for all non-trivial partitions of $P$, the subgraph of $N_i$ induced by $P$ must be connected.
\end{proof}
Note that if we would instead consider the subgraph of $N_i$ in which we only use edges $\{e_1,e_2\}$ such that $e_1$ and $e_2$ are both neighboring \emph{and} in direct phase, we could have phases with a disconnected induced subgraph. An example of this is \Cref{fig:3DSchurr}, where none of the front two vertical edges is in direct phase with their neighboring vertical edge in the back. We will elaborate more on this in \Cref{sec:dSchurrCube}.

The connectedness of phases will prove to be a useful tool in the next section where we prove \Cref{thm:AllHEdgesAreInPhase=>Hypervertex}, a connection between phases and \emph{hypervertices}.


\subsection{Phases and Hypervertices}\label{sec:hypervertices}

A face \face of a cube \Cube is called a \emph{hypervertex}, if and only if for all vertices $\nodeA, \nodeB \in \face$ and all dimensions $i$ not spanned by $f$, we have $\orientation(\nodeA)_i = \orientation(\nodeB)_i$.
In other words, for each dimension~$i$, all $i$-edges between $f$ and the rest of the cube are oriented the same way.

By \Cref{lem:flippable}, a hypervertex of dimension $1$ is thus a flippable edge. We therefore know that we can change the orientation within a one-dimensional hypervertex arbitrarily without destroying the USO condition. The following lemma due to Schurr and \szabo{} generalizes this to higher-dimensional hypervertices.

\begin{lemma}[{\cite[Corollary 6]{schurr2004quadraticbound}}]
\label{lem:exchangeHypervertices}
Let $O$ be a $n$-dimensional USO, and \face some $k$-dimensional hypervertex. Then, the orientation on the face \face can be replaced by any other $k$-dimensional USO $\orientation'_\face$, such that the resulting orientation~$\orientation'$ defined by
\[\forall v\in V(\Cube),i\in [\dimension]: \orientation'(v)_i:= \begin{cases}
    \orientation(v)_i & \text{if }\face_i\not=* \text{ or } v\not\in \face, \\
    \orientation'_\face(\{v_j \;|\; \face_j = *\})_i & \text{otherwise},
\end{cases}
\]
is also a USO.
\end{lemma}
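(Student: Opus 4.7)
The plan is to verify the \szabo{}-Welzl condition (\Cref{lem:szabowelzl}) directly for $\orientation'$, by a case analysis on how many of the two compared vertices lie inside the face \face. It will help to restate the piecewise definition compactly: whenever $u \notin \face$, $\orientation'(u) = \orientation(u)$; for $u \in \face$, $\orientation'(u)$ agrees with $\orientation(u)$ on dimensions not spanned by \face, and with $\orientation'_\face$ on the $k$ dimensions spanned by \face.

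If both $\nodeA, \nodeB \in \face$, then $\nodeA \xor \nodeB$ is supported on the dimensions spanned by \face, and on those dimensions $\orientation'(\nodeA)$ and $\orientation'(\nodeB)$ coincide with the $k$-dimensional USO $\orientation'_\face$ evaluated at the $\face$-restrictions of $\nodeA$ and $\nodeB$. The \szabo{}-Welzl condition for $\orientation'_\face$ immediately produces the witness dimension. If neither $\nodeA$ nor $\nodeB$ lies in \face, the first branch of the definition yields $\orientation'(\nodeA) = \orientation(\nodeA)$ and $\orientation'(\nodeB) = \orientation(\nodeB)$, so the condition for the original USO $\orientation$ transfers verbatim.

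The substantive case is the mixed one: say $\nodeA \in \face$ and $\nodeB \notin \face$. The idea is to use the hypervertex property to transport a disagreement from a cleverly chosen vertex of \face back to $\nodeA$ itself. I would let $\nodeC$ be the projection of $\nodeB$ onto \face, i.e., $\nodeC_j = \nodeB_j$ on dimensions $j$ spanned by \face and $\nodeC_j = \face_j$ elsewhere. Then $\nodeC \in \face$, and $\nodeC \xor \nodeB$ is nonempty and supported only on dimensions not spanned by \face. Applying \szabo{}-Welzl to $(\nodeC, \nodeB)$ in $\orientation$ produces some such dimension $j$ with $\orientation(\nodeC)_j \ne \orientation(\nodeB)_j$. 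The hypervertex property then gives $\orientation(\nodeA)_j = \orientation(\nodeC)_j$, hence $\orientation(\nodeA)_j \ne \orientation(\nodeB)_j$. Because $j$ is not spanned by \face, the definition of $\orientation'$ yields $\orientation'(\nodeA)_j = \orientation(\nodeA)_j$ and $\orientation'(\nodeB)_j = \orientation(\nodeB)_j$; and $j \in \nodeA \xor \nodeB$, since $\nodeA_j = \face_j \ne \nodeB_j$.

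The main obstacle is precisely this mixed case. The subtlety is picking the auxiliary vertex $\nodeC$ so that the disagreement dimension handed out by \szabo{}-Welzl for $\orientation$ is guaranteed to be non-spanned by \face --- only then does replacing $\orientation$ by $\orientation'_\face$ inside \face leave the relevant outmap entries untouched, and only then does the hypervertex property allow us to propagate the disagreement from $\nodeC$ to $\nodeA$. With this choice in hand, the remaining two cases amount to routine unpacking of the definitions.
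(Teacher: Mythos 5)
The paper does not actually prove \Cref{lem:exchangeHypervertices}; it is imported verbatim as Corollary~6 of Schurr and \szabo{}~\cite{schurr2004quadraticbound}, so there is no in-paper argument to compare against. Your proof is correct and self-contained, and it is essentially the standard argument: the only non-routine case is the mixed one, and your choice of \nodeC as the projection of \nodeB onto \face correctly forces the \SWC witness dimension for the pair $(\nodeC,\nodeB)$ in \orientation to be non-spanned by \face; the hypervertex property then transports the disagreement from \nodeC to \nodeA, and since that dimension is non-spanned, the definition of $\orientation'$ leaves the relevant outmap coordinates of both \nodeA and \nodeB untouched. The one step you leave implicit is that $\orientation'$ is a well-defined orientation (each edge's two endpoints must still disagree in the corresponding outmap coordinate), which is formally required before \Cref{lem:szabowelzl} can be invoked; this is immediate, since edges with both endpoints in \face lie in spanned dimensions and inherit consistency from $\orientation'_\face$, while edges leaving \face lie in non-spanned dimensions and keep their \orientation-values at both endpoints.
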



Hypervertices thus allow for a local change of the orientation. We will now investigate what this implies about the structure of phases.
In particular, we prove the following two statements, 
given an $n$-dimensional USO $O$ and some face \face:

\begin{lemma}\label{lem:hypervertexIFFphases}
    The face $f$ is a hypervertex if and only if for all $i\in\dimOfFace$, $\edge \in E(\face)_i$ and $\edgeB \in E(\Cube \setminus \face)_i$, \edge and \edgeB are not in phase.
\end{lemma}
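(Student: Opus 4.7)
The plan is to prove both directions separately, with the forward direction relying on the hypervertex exchange lemma and the backward direction relying on a direct 2-face computation.

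For the forward direction, I would fix any $i \in \dimOfFace$ and any $i$-phase $P$ of $O$ that contains an edge $e \in E(f)_i$, and argue that $P \subseteq E(f)_i$. The key idea is to move to the restricted orientation $O|_f$, which is itself a USO of $f$ (viewed as a $|\dimOfFace|$-dimensional cube). Let $P_f$ be the $i$-phase of $O|_f$ containing $e$; then flipping $P_f$ inside $O|_f$ yields another USO of $f$ by \Cref{obs:schurrpropositionSingleDim} applied to $O|_f$. Since $f$ is assumed to be a hypervertex, \Cref{lem:exchangeHypervertices} lets us substitute this flipped USO back into $O$, producing a global USO which is exactly $O \flip P_f$. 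Because $P_f \subseteq E_i$, a second application of \Cref{obs:schurrpropositionSingleDim}, this time to the global USO $O$, forces $P_f$ to be a union of $i$-phases of $O$. Since $i$-phases of $O$ partition $E_i$ and $e$ lies in both $P$ and $P_f$, this yields $P \subseteq P_f \subseteq E(f)_i$, as required.

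For the backward direction, I would prove the contrapositive: assuming $f$ is not a hypervertex, I would construct an $i$-edge inside $f$ in direct phase with an $i$-edge outside $f$. Non-hypervertex-ness provides a dimension $j \notin \dimOfFace$ and vertices $u, w \in f$ with $\orientation(u)_j \neq \orientation(w)_j$. Walking along any path in $f$ from $u$ to $w$, the $j$-coordinate of the outmap must flip along some edge $\{v, v \neig i\}$ with $i \in \dimOfFace$, giving $\orientation(v)_j \neq \orientation(v \neig i)_j$. I then zoom into the 2-face spanned by $i$ and $j$ through $v$: its $i$-edge $\{v, v \neig i\}$ lies in $f$, while its parallel $i$-edge $\{v \neig j, (v \neig i) \neig j\}$ lies outside $f$ (since the step in dimension $j$ leaves $f$). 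A short case analysis shows that the diagonal pair $(v, (v \neig i) \neig j)$ satisfies the direct-phase condition of \Cref{def:DIP}: its xor is $\{i,j\}$, and one checks that $\orientation(v)_j = \orientation((v \neig i) \neig j)_j$ precisely because the two $j$-edges in the 2-face point in opposite directions. This produces the forbidden cross-boundary in-phase pair.

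I expect the forward direction to be the main conceptual step, since it requires combining the hypervertex exchange lemma with \Cref{obs:schurrpropositionSingleDim} applied at two different levels (the restricted USO on $f$ and the full USO on $\Cube$); the backward direction is essentially a direct 2-face outmap calculation once the correct vertex pair is identified.
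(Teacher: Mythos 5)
Your proof is correct and takes essentially the same route as the paper's: the ``only if'' direction combines \Cref{lem:exchangeHypervertices} with \Cref{obs:schurrpropositionSingleDim} exactly as the paper does (the paper simply flips all of $E(f)_i$ at once, which is a union of $i$-phases of the restricted USO, rather than the single phase $P_f$ containing $e$), and the ``if'' direction is the same contrapositive 2-face computation, with the path-walking step that locates the witnessing 2-face made explicit where the paper only asserts its existence.
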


\begin{lemma}
\label{thm:AllHEdgesAreInPhase=>Hypervertex}
If there exists some $i \in \dimOfFace$ such that $E(f)_i$ is an $i$-phase of $O$, 
then \face is a hypervertex.
\end{lemma}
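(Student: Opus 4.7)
The plan is to argue by contradiction: assume $E(\face)_i$ is an $i$-phase but $\face$ is not a hypervertex, so there exist $j\notin\dimOfFace$ and $\nodeA_0,\nodeA_1\in\face$ with $\orientation(\nodeA_0)_j\neq\orientation(\nodeA_1)_j$. Choose such a pair at minimum Hamming distance, so that $\nodeA_1=\nodeA_0\neig k$ for a single dimension $k\in\dimOfFace$. The goal is to contradict the maximality of the phase $E(\face)_i$.

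As a first step I will show that the $j$-orientation is invariant along $i$-edges of $\face$, namely $\orientation(\nodeC)_j=\orientation(\nodeC\neig i)_j$ for every $\nodeC\in\face$ and every $j\notin\dimOfFace$. Otherwise, the $2$-face spanned by $\{i,j\}$ through $\nodeC$ would exhibit a direct $i$-phase between $\{\nodeC,\nodeC\neig i\}\in E(\face)_i$ and the outside $i$-edge $\{\nodeC\neig j,\nodeC\neig i\neig j\}$, witnessed by the opposing pair $(\nodeC,\nodeC\neig i\neig j)$, contradicting maximality. This immediately disposes of the subcase $k=i$.

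The harder subcase is $k\neq i$. Here I aim to prove the key claim that the two $i$-edges $\{\nodeA_0,\nodeA_0\neig i\}$ and $\{\nodeA_1,\nodeA_1\neig i\}$ of the $2$-face of $\face$ spanning $\{i,k\}$ through $\nodeA_0$ are \emph{not} in direct $i$-phase. The argument works inside the $3$-face spanned by $\{i,k,j\}$ through $\nodeA_0$: the first-step invariance combined with $\orientation(\nodeA_0)_j\neq\orientation(\nodeA_1)_j$ puts both $2$-faces in dimensions $\{k,j\}$ within this $3$-face (through $\nodeA_0$ and through $\nodeA_0\neig i$) into the antipodal source-sink pattern, in which the two $k$-edges are parallel. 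This pins down the $k$-orientations of the vertices on the $j$-flipped side relative to $\orientation(\nodeA_0\neig i)_k=:\alpha$, and the requirement that $\{\nodeA_0,\nodeA_0\neig i\}$ is not in direct $i$-phase with the outside $i$-edge $\{\nodeA_1\neig j,\nodeA_1\neig i\neig j\}$ yields $\orientation(\nodeA_0)_k=\alpha=\orientation(\nodeA_0\neig i)_k$. Hence the $k$-edges in the $2$-face of $\face$ spanning $\{i,k\}$ through $\nodeA_0$ are also parallel, so the two $i$-edges there are not in direct $i$-phase.

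To close the argument, since $E(\face)_i$ is a single $i$-phase, there must be a chain $e_0,e_1,\dots,e_m$ inside $E(\face)_i$ of direct-$i$-phase steps connecting $\{\nodeA_0,\nodeA_0\neig i\}$ to $\{\nodeA_1,\nodeA_1\neig i\}$. I track the quantity $\phi(\nodeC):=\orientation(\nodeC)_j$, which by the first step is constant on each $i$-edge and thus well-defined per chain element. Each step moves from the $i$-edge at some $\nodeC\in\face$ to the one at $\nodeC\neig k_\ell$ and requires $\orientation(\nodeC)_{k_\ell}\neq\orientation(\nodeC\neig i)_{k_\ell}$; if $\phi$ were to change across this step, the key claim applied in dimensions $\{i,k_\ell,j\}$ at $\nodeC$ would instead force $\orientation(\nodeC)_{k_\ell}=\orientation(\nodeC\neig i)_{k_\ell}$, a contradiction. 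Hence $\phi$ is constant along the chain, contradicting $\phi(\nodeA_0)\neq\phi(\nodeA_1)$. I expect the main obstacle to be the $3$-face bookkeeping in the key claim, which must correctly couple the $2$-USO structure on the $\{k,j\}$-faces with the phase-maximality condition on $E(\face)_i$.
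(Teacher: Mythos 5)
Your opening steps are sound: reducing to an adjacent pair $v_0$, $v_1=v_0\neig k$ with differing $j$-orientation, and the invariance $\orientation(u)_j=\orientation(u\neig i)_j$ for all $u\in\face$ (otherwise the $i$-edge at $u$ is in direct phase with an $i$-edge outside \face, contradicting maximality of the phase $E(\face)_i$) both check out; the latter is also the paper's first move. Your ``key claim'' for the adjacent pair is likewise a correct computation: the anti-parallel $j$-edges force the $k$-edges of both $\{k,j\}$-squares to be parallel, and ruling out a direct phase with the outside edge $\{v_1\neig j,(v_1\neig i)\neig j\}$ pins down $\orientation(v_0)_k=\orientation(v_0\neig i)_k$, so the two $i$-edges of the $\{i,k\}$-square are indeed not in direct phase.

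The gap is in the final chain argument. You assume that the in-phaseness of $\{v_0,v_0\neig i\}$ and $\{v_1,v_1\neig i\}$ is witnessed by a chain of direct-in-phase steps each of which moves to a \emph{neighboring} $i$-edge ($u\mapsto u\neig k_\ell$). That is false: by \Cref{def:DIP} two $i$-edges can be in direct phase while the certifying vertices differ in arbitrarily many coordinates, and \Cref{sec:dSchurrCube} (already for $n=3$, see \Cref{fig:3DSchurr}) exhibits USOs in which a phase is held together \emph{only} by direct-in-phase relations between antipodal edges, so no chain of neighboring direct-in-phase steps exists. Your key claim controls $\phi$ only across single-dimension steps, so the invariance of $\phi$ along the chain does not follow. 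You also cannot substitute a path of neighboring edges inside the phase via \Cref{thm:connectedness}, since consecutive neighboring edges of a phase need not be in direct phase, and then the hypothesis $\orientation(u)_{k_\ell}\neq\orientation(u\neig i)_{k_\ell}$ that your contradiction relies on is unavailable. What this last step really needs is (a special case of) \Cref{lem:partialHypervertex}; the paper obtains it via the partial-swap construction combined with \Cref{thm:connectedness}, precisely because an argument that only inspects bounded-radius direct-in-phase certificates cannot close this gap.
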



In other words, \Cref{lem:hypervertexIFFphases}
says that a face is a hypervertex if and only if the phases of all edges of~\face are strictly contained within that face.
\Cref{thm:AllHEdgesAreInPhase=>Hypervertex} gives a slightly different sufficient condition for $f$ being a hypervertex: \face is a hypervertex if \emph{all} edges of \emph{one} dimension of \face are are exactly \emph{one} phase.

\begin{proof}[Proof of \Cref{lem:hypervertexIFFphases}]
We first prove the ``if'' direction. Assume \face is not a hypervertex. 
Then \face has at least one incoming and one outgoing edge in some dimension $i \notin \dimOfFace$.
More specifically there exists a 2-face crossing between $f$ and $Q_n\setminus f$ that has the aforementioned incoming and outgoing edge, i.e., a 2-face with $\{\nodeA, \nodeB\} \in E(\face)_j$,  $\{(\nodeA \neig i), (\nodeB \neig i)\}  \in E(\Cube \setminus \face)_j$ and $\orientation(\nodeA)_i \neq \orientation(\nodeB)_i$.
This however implies that the edge $\{\nodeA, \nodeB\}$ is in direct phase with the edge $\{(\nodeA \neig i), (\nodeB \neig i)\}$.

Next, we prove the ``only if'' direction and assume \face is a hypervertex. Let $i \in \dimOfFace$, $\edge \in E(\face)_i$ and $\edgeB \in E(\Cube \setminus \face)_i$.
By \Cref{lem:exchangeHypervertices} we can change the current orientation $O_f$ of a hypervertex \face to any arbitrary USO of the same dimension. We can thus replace it by the USO $O_f'=O_f\flip E(\face)_i$, i.e., $O_f$ with all edges of dimension $i$ flipped. This flips \edge but not \edgeB, and the result is still USO. Thus, by \Cref{obs:schurrpropositionSingleDim}, \edge and \edgeB are not in phase.
\end{proof}

\ifdefined\arxiv

To prove \Cref{thm:AllHEdgesAreInPhase=>Hypervertex} we need the connectedness of phases (\Cref{thm:connectedness}) from the previous subsection, as well as another ingredient, the \emph{partial swap}: Given a USO \orientation and a dimension $j\in [\dimension]$, the \emph{partial swap} is the operation of swapping the subgraph of the upper $j$-facet induced by the endpoints of all \emph{upwards oriented} $j$-edges with the corresponding subgraph in the lower $j$-facet. By \cite{borzechowski2022construction} this operation preserves the USO condition and all $j$-phases.


\begin{proof}[Proof of \Cref{thm:AllHEdgesAreInPhase=>Hypervertex}]
We prove this theorem by contradiction.
Assume $O$ is a USO with a dimension $i \in \dimOfFace$ such that $E(f)_i$ is an $i$-phase of $O$, 
but \face is \emph{not} a hypervertex.
Thus, there exists a dimension $j \in [n] \setminus \dimOfFace$ such that for some pair of vertices $\node, \nodeB \in \face$, the orientation of the connecting edges $\{\node, \node \neig j \}$ and  $\{\nodeB, \nodeB \neig j \}$ differs.

First, we switch our focus to the $n'$-dimensional face $f'$ that contains $f$ and for which $dim(f')=dim(f)\cup\{j\}$. Without loss of generality, we assume $f$ is the lower $j$-facet of $f'$.
Second, we adjust the orientation of the $i$-edges. 
We let all $i$-edges in $E(\face)$ point upwards and all $i$-edges in $E(f') \setminus E(\face)$ point downwards. Since $E(f)_i$ is a phase of $O$, the resulting orientation $O'$ of $f'$ is a USO.

For all $\{v, w\}  \in E(\face)_i$ it holds that $O(v)_j = O(w)_j$, since otherwise the edge $\{v,w\}$ would be in direct phase with the edge $\{v\neig j,w\neig j\}$. Thus, the orientation of the $j$-edges splits $E(f)_i$ into two parts: 
\begin{align*}
E(\face)_i^+ &:= \{ \{v, w\}  \in E(\face)_i \;|\; O'(v)_j = O'(w)_j=0 \} \text{ and} \\
E(\face)_i^- &:=  \{ \{v, w\}  \in E(\face)_i \;|\; O'(v)_j = O'(w)_j = 1 \}.    
\end{align*}

As $E(\face)_i$ is a phase, there must be some edge $\edge^+\in E(\face)_i^+ $ which is in direct phase to some edge $\edge^-\in E(\face)_i^- $. We now perform a partial swap on $\orientation'$ in dimension $j$, yielding a USO $\orientation''$. 
The endpoints of $\edge^+$ are not impacted by this operation, but the endpoints of $\edge^-$ are moved to the opposite $j$-facet, now forming a new edge ${\edge'}^{-}$. The two edges $e^+$ and ${\edge'}^{-}$ must be in direct phase in $O''$, since in dimension $j$ all four of their endpoints are incident to an incoming edge, and in all the other dimensions the outmaps of the endpoints of ${\edge'}^{-}$ in $O''$ are the same as the outmaps of the endpoints of ${\edge}^-$ in $O'$.

\begin{figure}
    \centering
    \ifdefined\arxiv
    \includegraphics{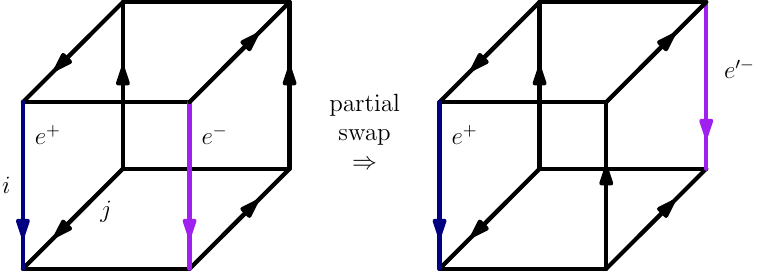}
    \else
    \includegraphics[scale=0.6]{img/proof_partialswap.pdf}
    \fi
    \caption{Sketch of $O'$ and $O''$ from the proof of \Cref{thm:AllHEdgesAreInPhase=>Hypervertex}. The edges $e^+\in E(f)_i^+$ and $e^-\in E(f)_i^-$ are pulled apart by the partial swap. If they were in direct phase before they must still be in direct phase, however the phase connecting $e^+$ and $e'^-$ after the partial swap cannot be connected.}
    \label{fig:counterExample}
\end{figure}

For every pair of $i$-edges neighboring in dimension $j$, exactly one is upwards and one is downwards oriented. Let $P$ be the phase containing $\edge^+$ and ${\edge'}^-$. Since $P$ contains only upwards oriented $i$-edges, and since $P$ contains at least one edge of each $j$-facet, the subgraph of $\neighborhoodGraph_i$ induced by $P$ cannot be connected. See \Cref{fig:counterExample} for a sketch of $O'$ and $O''$.
This is a contradiction to \Cref{thm:connectedness}, which proves the lemma.
\end{proof}


\Cref{thm:AllHEdgesAreInPhase=>Hypervertex} can be adjusted and also holds for phases which are not full faces, only applying to dimensions that leave the minimal face which includes the phase. 

\begin{lemma}
\label{lem:partialHypervertex}
Let $O$ be a USO on the cube \Cube and $P$ an $i$-phase of $O$. Let \face be the minimal face with $P \subseteq E(\face)_i$.
Then, for all dimensions $j \in [n] \setminus \dimOfFace$ and for all $v, w \in P$, we have $O(v)_j = O(w)_j$.   
\end{lemma}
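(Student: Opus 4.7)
The plan is to adapt the partial-swap strategy used in the proof of Theorem~\ref{thm:AllHEdgesAreInPhase=>Hypervertex}. I would first prove a local version of the statement and then extend it to the full statement by contradiction.

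For the local step, I would show that for each edge $e = \{v, v\neig i\} \in P$ and each $j \notin \dimOfFace$, both endpoints of $e$ have the same $j$-outmap: $\orientation(v)_j = \orientation(v\neig i)_j$. Assuming otherwise, the pair $(v, v\neig i\neig j)$ differs only in dimensions $i$ and $j$, and since the two endpoints of the $j$-edge incident to $v\neig i$ carry opposite $j$-outmaps, we obtain $\orientation(v\neig i\neig j)_j = \orientation(v)_j$. Thus this pair certifies that $e$ and the shadow edge $e^{(j)} = \{v\neig j, v\neig i\neig j\}$ are in direct phase, forcing $e^{(j)} \in P$. But $e^{(j)}$ has endpoints outside $f$, contradicting $P \subseteq E(f)_i$. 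Each $e \in P$ therefore has a well-defined common $j$-outmap $J(e) \in \{0,1\}$.

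For the global step, suppose for contradiction that $J$ is not constant; write $P = P^0 \sqcup P^1$ partitioned by $J$-value, with both parts non-empty. Since $P$ is a single phase (obtained as the transitive closure of direct-phaseness), there exist direct-phase partners $e^0 \in P^0$ and $e^1 \in P^1$ in $\orientation$, certified by some pair $(v^0, v^1)$. Extend $f$ to $f'$ by adjoining dimension $j$ (taking $f$ as the lower $j$-facet of $f'$) and apply the partial swap in dimension $j$ to obtain a USO $\orientation''$. The endpoints of $e^1$ (whose $j$-outmap is $1$) are transported to the upper $j$-facet, producing a new edge $e^{1'}$ there, while the endpoints of $e^0$ are unchanged. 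A direct check---using that partial swap preserves the non-$j$ outmaps of the transported data and that $(v^0, v^1)$ already certifies direct phase in $\orientation$---confirms that $(v^0, v^1 \neig j)$ certifies direct phase between $e^0$ and $e^{1'}$ in $\orientation''$. Hence $e^0$ and $e^{1'}$ lie in a common $i$-phase $P^\ast$ of $\orientation''$ that contains edges in both $j$-facets of $f'$.

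The final step---and the main obstacle---is to derive a contradiction with Theorem~\ref{thm:connectedness} by showing that the induced subgraph of $P^\ast$ in $\neighborhoodGraph_i$ is disconnected. Paralleling the original theorem, this would follow if every shadow pair of $i$-edges in $\orientation''$ has opposite $i$-directions, since $P^\ast$ (having uniform $i$-direction, a property common to every phase) could then not contain both members of such a pair and hence could not bridge between the two $j$-facets. Unlike the original proof, we lack the freedom to perform a preliminary adjustment making all $i$-edges in $f$ point uniformly upward (only $P$, not all of $E(f)_i$, is a phase). Resolving this would likely require a more refined analysis of how the partial swap affects the specific $i$-edges of $P^\ast$, or the use of additional valid flips exploiting the structure of $P$ before invoking the partial swap.
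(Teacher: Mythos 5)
Your overall strategy is exactly the one the paper intends (its ``proof'' of \Cref{lem:partialHypervertex} is literally ``the same as for \Cref{thm:AllHEdgesAreInPhase=>Hypervertex}''), and your local step, the splitting $P=P^0\sqcup P^1$, and the verification that the partial swap transports the direct-phase certificate to the pair $(v^0, v^1\neig j)$ are all correct. But the proof is incomplete where you say it is: you never establish that the phase $P^\ast$ of $\orientation''$ containing $e^0$ and $e^{1'}$ is disconnected in $\neighborhoodGraph_i$, so no contradiction with \Cref{thm:connectedness} is actually derived. That final step is the entire point of the argument, so as written this is a genuine gap, not a cosmetic one.

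The gap is closable, and the obstacle you name (``we cannot make all $i$-edges of $f$ point upward'') is not the normalization you actually need. Work inside $f'$ and note that $P$, being a full $i$-phase of $\orientation$, is a \emph{union} of $i$-phases of $\orientation|_{f'}$ (direct-phase certificates between edges of $f'$ live inside $f'$, so the phases of $\orientation|_{f'}$ refine those of $\orientation$); hence $E(f')_i\setminus P$ is also a union of phases of $\orientation|_{f'}$. Since every phase is unidirectional, the set of $i$-edges you must flip to make \emph{all of $P$ point up and all of $E(f')_i\setminus P$ point down} is itself a union of phases of $\orientation|_{f'}$, so by \Cref{obs:schurrpropositionSingleDim} this normalization yields a USO $\orientation'$ and preserves all the data you use ($j$-outmaps, the partition $P^0\sqcup P^1$, and the direct-phase certificate, which only involves non-$i$ coordinates). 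After the partial swap, a vertex $x$ has its $i$-edge pointing up in $\orientation''$ if and only if $\sigma(x)$ is an endpoint of an edge of $P$, where $\sigma$ is the (involutive) vertex relocation of the swap; since the endpoints of $P$ all lie in the lower $j$-facet and $\sigma$ moves a vertex by at most one step in dimension $j$, the upward $i$-edges of $\orientation''$ are exactly $P^0\cup(P^1\neig j)$, and no two of them form a $j$-shadow pair. As $P^\ast$ consists of upward edges only and meets both $j$-facets, it induces a disconnected subgraph of $\neighborhoodGraph_i$, giving the contradiction with \Cref{thm:connectedness}. With this normalization and this last observation inserted, your proof is complete and coincides with the paper's intended argument.
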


\begin{proof}
The proof is exactly the same as for \Cref{thm:AllHEdgesAreInPhase=>Hypervertex}.
\end{proof}

\else
We prove \Cref{thm:AllHEdgesAreInPhase=>Hypervertex} in the appendix. Note that the proof uses the connectedness of phases (\Cref{thm:connectedness}) as well as the \emph{partial swap} construction from \cite{borzechowski2022construction}. With the same strategy we can also prove a version of \Cref{thm:AllHEdgesAreInPhase=>Hypervertex} for phases that are not full faces:
\begin{lemma}
\label{lem:partialHypervertex}
Let $O$ be a USO on the cube \Cube and $P$ an $i$-phase of $O$. Let \face be the minimal face with $P \subseteq E(\face)_i$.
Then, for all dimensions $j \in [n] \setminus \dimOfFace$ and for all $v, w \in P$, we have $O(v)_j = O(w)_j$.  
\end{lemma}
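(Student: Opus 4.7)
The plan is to mirror the proof of \Cref{thm:AllHEdgesAreInPhase=>Hypervertex}, substituting the partial phase $P$ for the full set $E(f)_i$. I would argue by contradiction, assuming there exist $j \in [\dimension] \setminus \dim(f)$ and vertices $v, w \in P$ with $O(v)_j \neq O(w)_j$.

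First I would show that each individual edge $\{v, w\} \in P$ satisfies $O(v)_j = O(w)_j$. Otherwise, the pair $(v, w \neig j)$ (which differs in the coordinates $\{i, j\}$) certifies that $\{v, w\}$ and $\{v \neig j, w \neig j\}$ are in direct phase, placing the $j$-shifted edge into $P$; but this edge lies outside $E(f)_i$, contradicting $P \subseteq E(f)_i$. So every edge of $P$ carries a well-defined $j$-value, and by the contradiction hypothesis we split $P$ non-trivially into $P^+$ and $P^-$. I would then pass to the face $f'$ with $\dim(f') = \dim(f) \cup \{j\}$, taking WLOG $f$ as the lower $j$-facet of $f'$, and construct a modified orientation $O'$ on $f'$ by first flipping $P$ (if needed) so that $P$ becomes oriented upward in dimension $i$, and then flipping every other $i$-phase of $O|_{f'}$ that is currently upward. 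From here the argument proceeds exactly as in the original: I would pick $e^+ \in P^+$ and $e^- \in P^-$ in direct phase in $O'$, apply a partial swap in dimension $j$ to obtain a USO $O''$, observe that $e^+$ and $e'^- := e^- \neig j$ are in direct phase in $O''$ and both upward-oriented in $i$, and conclude that the phase $P''$ of $O''$ containing them is entirely upward, meets both $j$-facets of $f'$, and hence by \Cref{thm:connectedness} must be connected in $N_i$. But no 2-face of $O''$ spanned by $\{i, j\}$ has both of its $i$-edges upward: in $O'$ every upper-facet $i$-edge is downward, and the partial swap in $j$ preserves whether the two antipodal $i$-edges of such a 2-face point the same way. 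Hence no path in $N_i$ can cross between the two $j$-facets, a contradiction.

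The main obstacle is the construction of $O'$, which in the original proof leaned crucially on $E(f)_i$ being an entire phase. In our setting $P$ is only a subset of $E(f)_i$, and some $i$-phases of $O|_{f'}$ may span both $j$-facets of $f'$, so the wholesale flip used in the original cannot be imitated directly. The resolution is that the set I flip---namely $P$ itself (if necessary) together with every other currently-upward $i$-phase of $O|_{f'}$---is automatically a union of $i$-phases, so by \Cref{obs:schurrpropositionSingleDim} the resulting $O'$ is a USO on $f'$. By construction, $P$ is entirely upward in $i$ and every $i$-edge of $f'$ outside $P$ is downward; the edges of $E(f)_i \setminus P$ may end up reoriented in arbitrary directions, but this is irrelevant for the final contradiction, which only uses that every upper-facet $i$-edge of $O'$ is downward in dimension $i$.
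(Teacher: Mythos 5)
Your proof is correct and follows essentially the same route as the paper, which simply declares that the proof of \Cref{thm:AllHEdgesAreInPhase=>Hypervertex} carries over verbatim. You correctly identify and repair the one spot where ``verbatim'' glosses over a detail --- the construction of $O'$ must flip $P$ and the remaining $i$-phases of the restriction of $O$ to $f'$ individually (each a union of phases, so \Cref{obs:schurrpropositionSingleDim} applies) rather than flipping $E(\face)_i$ wholesale --- and your observation that the final contradiction only needs that no $\{i,j\}$-2-face has both $i$-edges upward is exactly right.
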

\fi

It is unclear how --- in the setting of \Cref{lem:partialHypervertex} --- the edges between $P$ and the vertices within $f$ behave. Furthermore it is not known whether some form of \Cref{lem:hypervertexIFFphases} can also be extended to these non-facial subgraph structures.

\subsection{Phases and Matchings} \label{sec:SchurrsProposition}
In \cite{schurr2004phd}, Schurr stated the following proposition, generalizing \Cref{obs:schurrpropositionSingleDim} from sets of edges in the same dimension to \emph{all} matchings:
\begin{proposition}[{\cite[Proposition 4.9]{schurr2004phd}}]\label{prop:schurr}
    Let $O$ be a USO and $H\subseteq E$ be a matching. Then, $O\flip H$ is a USO if and only if $H$ is a union of phases of $O$.
\end{proposition}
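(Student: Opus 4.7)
The plan is to prove both directions by induction on the dimension \dimension of the cube, with base case $\dimension=1$ being immediate.

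For the ``if'' direction, assume $H$ is a matching and a union of phases of $O$, and suppose toward contradiction that $O\flip H$ is not USO. Then there exists a minimal face \face on which $(O\flip H)|_\face$ is a PUSO. Write $M := H\cap E(\face)$. The first step is to verify that $M$ is a matching in \face and a union of phases of $O|_\face$: the direct-phase condition for two $i$-edges inside \face depends only on outmap values in dimensions in \dimOfFace, so direct phase in $O|_\face$ coincides with direct phase in $O$ on $E(\face)_i$; hence the $i$-phases of $O|_\face$ refine the family $\{P\cap E(\face): P\in \P_i(O)\}$, forcing each $H_i\cap E(\face)$ to be a union of $i$-phases of $O|_\face$. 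If \face has dimension strictly less than \dimension, the inductive hypothesis applied to $(O|_\face, M)$ yields that $O|_\face \flip M = (O\flip H)|_\face$ is USO, contradicting PUSO. In the remaining case where \face is the full cube \Cube, \Cref{lem:PUSO} gives $O(\nodeA)\oplus O(\nodeB) = g_H(\nodeA)\oplus g_H(\nodeB)$ for every antipodal pair $\nodeA,\nodeB$ in \Cube, where $g_H(\nodeC)\in\{0,1\}^\dimension$ records the (at most one) dimension of the $H$-edge incident to \nodeC. When some antipodal pair has exactly one endpoint on an $H$-edge, say \nodeA on an $H$-edge in dimension $k$, then $O(\nodeA)\oplus O(\nodeB)$ is the $k$-th standard basis vector, which witnesses the direct phase of the $k$-edges at \nodeA and at \nodeB; since $H$ is a union of phases and the $k$-edge at \nodeA lies in $H$, the $k$-edge at \nodeB must also lie in $H$, contradicting that \nodeB is not incident to any $H$-edge.

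For the ``only if'' direction I argue by contrapositive: assume $H$ is not a union of phases. Pick an $i$-phase $P$ of $O$ with $C := P\cap H$ a proper non-empty subset. By the definition of phases as the transitive closure of direct phase, the direct-phase graph restricted to $P$ is connected, so there exist $\edgeA\in C$ and $\edgeB\in P\setminus C$ in direct phase, witnessed by $\nodeA\in\edgeA$ and $\nodeB\in\edgeB$. Let $S$ be the subcube spanned by \nodeA and \nodeB. If $S$ has dimension less than \dimension, the direct-phase condition persists in $O|_S$, so $H\cap E(S)$ is not a union of phases of $O|_S$, and the inductive hypothesis gives that $(O\flip H)|_S = O|_S \flip (H\cap E(S))$ is not USO, whence $O\flip H$ itself is not USO since faces of USOs are USOs. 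If instead $S = \Cube$ (antipodal witnesses) and \nodeB has no incident $H$-edge, the same computation as in the ``if'' direction yields $(O\flip H)(\nodeA) = (O\flip H)(\nodeB)$, directly violating the \SWC on the antipodal pair $\nodeA,\nodeB$.

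The hardest part I expect will be the antipodal subcases in both directions where the direct contradictions above are unavailable: in the ``if'' direction, when every antipodal pair has both endpoints incident to $H$-edges of different dimensions (which forces $H$ to be a perfect matching with a highly constrained structure in which antipodal vertices always lie in $H$-edges of different dimensions); and in the ``only if'' direction, when \nodeB is itself incident to an $H$-edge in some dimension $j\neq i$. No single antipodal pair then yields an immediate contradiction. The remedy is to iterate \Cref{lem:PUSO} on further antipodal pairs such as $(\nodeA\ominus i, \nodeB\ominus i)$ and $(\nodeA\ominus j, \nodeB\ominus j)$, using the matching constraint to tightly control which $H$-edges may appear around the vertices involved, and combining this with \Cref{thm:connectedness} and \Cref{lem:partialHypervertex} to propagate phase-membership constraints along $P$ and along the $j$-phase containing the extra $H$-edge at \nodeB until either a Szab{\'o}-Welzl violation or a contradiction with $H$ being a union of phases surfaces. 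This iterated PUSO analysis, rather than a one-shot argument on a single antipodal pair, is precisely the technical step that relies on the pseudo USO results of~\cite{bosshard2017pseudo} and where Schurr's original proof broke down.
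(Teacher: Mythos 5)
Your overall scaffolding (restrict to a minimal PUSO face, apply \Cref{lem:PUSO} to antipodal pairs, induct on dimension) is sound as far as it goes, and the easy cases you dispose of are handled correctly. But both directions of your argument bottom out in exactly the case you defer to the end, and that case is not resolved --- it is the entire content of the proposition. In the ``only if'' direction, when the witnessing pair $v,w$ for the direct phase of $e\in H$ and $e'\notin H$ is antipodal and $w$ carries an $H$-edge in some dimension $j\neq i$, the outmap of $w$ changes under the flip and no \SWC violation appears at $\{v,w\}$; this is precisely where Schurr's original proof broke down, as you note. Symmetrically, in the ``if'' direction you are left with the case that every antipodal pair has its two endpoints on $H$-edges of \emph{different} dimensions. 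Your proposed remedy --- ``iterate \Cref{lem:PUSO} on further antipodal pairs \dots until a violation surfaces'' --- is a plan, not a proof: you do not exhibit the pair that violates the \SWC, nor show that the iteration terminates in a contradiction, and invoking \Cref{thm:connectedness} and \Cref{lem:partialHypervertex} here does not obviously close the loop. As written, the proposal proves neither direction in full.

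For comparison, the paper closes these gaps with two quite different ideas. For the ``if'' direction it avoids PUSOs altogether: given $u,v$ violating the \SWC in $O\flip H$ with $H$-edges of dimensions $i\neq j$ at $u$ and $v$, it uses the bijectivity of the outmap on the face spanned by $u$ and $v$ to produce a third vertex $w$ whose $i$-edge is in direct phase with that of $u$ \emph{and} whose $j$-edge is in direct phase with that of $v$; since $H$ is a union of phases it would have to contain both edges at $w$, contradicting the matching property. For the ``only if'' direction the paper does not chase a single witnessing pair at all. Instead, two minimization lemmata reduce to a lowest-dimensional counterexample $(\orientation,\Matching)$ in which $\Matching\cap\facet$ is a union of phases in every facet \facet and \Matching contains no complete phase; then for each dimension $i$ with $\Matching_i\neq\emptyset$ the orientation $\orientation\flip\Matching_i$ is a full-cube PUSO, and applying \Cref{lem:PUSO} to two such dimensions $i\neq j$ simultaneously forces $(\nodeA\xor\nodeB)\wedge(\orientation(\nodeA)\xor\orientation(\nodeB))$ to equal both $I_i$ and $I_j$ --- a contradiction. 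If you want to salvage your inductive outline, you would at minimum need to import the bijectivity argument into your ``if'' case and find a replacement for the minimization step in the ``only if'' case; the single-antipodal-pair analysis alone cannot succeed.
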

Sadly, Schurr's proof of the ``only if'' direction of this proposition is wrong~\cite{Dani}. We reprove the proposition in this section.
\ifdefined\arxiv
Let us first restate the (correct) proof of the ``if'' direction:
\else
Let us first restate the ``if'' direction, which was already proven correctly by Schurr:
\fi
\begin{lemma}[\Cref{prop:schurr}, ``if'']\label{lem:schurrforward}
Let $O$ be a USO and $H\subseteq E$ be a matching that is a union of phases. Then, $O\flip H$ is a USO.
\end{lemma}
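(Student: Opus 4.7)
The plan is to verify the \SWC (\Cref{lem:szabowelzl}) for $O' := O \flip H$ directly, by contradiction. Suppose there exist vertices $v, w$ with $(v\oplus w) \wedge (O'(v) \oplus O'(w)) = 0^n$. Writing $\delta(u) := O'(u) \oplus O(u)$ for the outmap change at each vertex $u$, the matching assumption forces $\delta(u) \in \{0^n\} \cup \{I_j : j \in [n]\}$, since each vertex is incident to at most one edge of $H$. Substituting, the violation rewrites as $(O(v) \oplus O(w)) \wedge (v\oplus w) = (\delta(v) \oplus \delta(w)) \wedge (v\oplus w)$, whose right-hand side has at most two nonzero bits.

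I would first dispose of the two easy sub-cases. If the right-hand side is $0^n$, this contradicts \Cref{lem:szabowelzl} applied to $O$. If it has exactly one nonzero bit, at some position $j$, then $O(v)$ and $O(w)$ agree on $(v\oplus w)\setminus\{j\}$ and disagree at $j$ — precisely the direct-phase certificate (\Cref{def:DIP}) for the $j$-edges incident to $v$ and $w$. These two edges therefore lie in a common $j$-phase; since $H$ is a union of phases, they are either both in $H$ or both not in $H$, contradicting the fact that exactly one of $\delta(v)_j,\delta(w)_j$ equals $1$.

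The remaining (hard) case is when the right-hand side has two nonzero bits, at $j_1 \neq j_2 \in v\oplus w$; this forces $\delta(v) = I_{j_1}$ and $\delta(w) = I_{j_2}$ (up to swapping). Let $C$ be the subcube spanned by $\{v, w\}$. My plan here is to propagate $\delta = I_{j_1}$ from $v$ to every vertex of $C$, and symmetrically $\delta = I_{j_2}$ from $w$; as $j_1 \neq j_2$, the resulting simultaneous assignment is contradictory. The propagation step relies on the following 2-face observation: for any 2-face $F \subseteq C$, $H \cap E(F)$ is simultaneously a matching and a union of $O|_F$-phases (the latter because any direct-phase witness within $F$ is also a direct-phase witness in $O$, so $O|_F$-phases refine $O$-phases intersected with $F$). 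A 2-dimensional USO is an ``eye'' with exactly two phases, one per dimension, and the only matchings among their unions are $\emptyset$, the full $j$-phase, and the full $j'$-phase (where $F$ spans dimensions $\{j,j'\}$). Hence the presence of any single $j_1$-edge of $F$ in $H$ forces both $j_1$-edges of $F$ into $H$, assigning $\delta = I_{j_1}$ to all four vertices of $F$.

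The main obstacle I expect is organizing this iteration cleanly. By induction on the Hamming distance from $v$ within $v \oplus w$, for a vertex $u \in V(C)$ written as $u = u' \neig j$ where $\delta(u') = I_{j_1}$ has already been established and $j \in v \oplus w$: either $j = j_1$, so $u$ is the second endpoint of the $H$-edge at $u'$ and inherits $\delta(u) = I_{j_1}$ directly from matching; or $j \neq j_1$, and applying the 2-face observation to the 2-face spanned by $u'$ in $\{j_1, j\}$ yields $\delta(u) = I_{j_1}$. Performing the symmetric propagation from $w$ with $j_2$ in place of $j_1$ yields $\delta(u) = I_{j_2}$ for every $u \in V(C)$, giving the desired contradiction.
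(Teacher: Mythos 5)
Your reduction of the \SWC-violation to the three cases according to the number of set bits in $(\delta(v)\oplus\delta(w))\wedge(v\xor w)$ is a nice reformulation, and the zero-bit and one-bit cases are handled correctly (they correspond to the paper's two easy cases). The gap is in the two-bit case, and it is located precisely in your ``2-face observation''. Your claim that a $2$-dimensional USO is an eye with exactly two phases, one per dimension, is false: an \emph{eye} (sink and source antipodal, both dimensions combed) has \emph{four} singleton phases, every edge being flippable, and a \emph{bow} (sink and source adjacent) has three phases. Consequently, the set of matchings that are unions of phases of a $2$-face is much richer than $\{\emptyset, E(F)_{j}, E(F)_{j'}\}$ --- in particular a single flippable edge qualifies. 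The two $j_1$-edges of a $2$-face $F$ spanning $\{j_1,j\}$ lie in a common $O|_F$-phase only when the two $j$-edges of $F$ are oriented in opposite directions; whenever they are combed, the presence of one $j_1$-edge of $F$ in $H$ forces nothing about the other, and your induction step breaks. A structural sanity check confirms the argument cannot be repaired as stated: your propagation uses the violation hypothesis only to seed $\delta(v)=I_{j_1}$ and $\delta(w)=I_{j_2}$, so if it were valid it would derive the absurd conclusion $\delta(u)=I_{j_1}=I_{j_2}$ for all $u$ in the spanned subcube in perfectly benign configurations (e.g.\ $H$ consisting of two far-apart flippable edges in different dimensions), where no contradiction exists.

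The paper closes this case not by propagating through the whole subcube but by exhibiting a \emph{single} third vertex: since the outmap of a USO is a bijection on every face, there is a vertex $z$ in the face spanned by $v$ and $w$ whose outmap restricted to that face equals $O(v)\xor I_{j_1}$. Then $z$'s $j_1$-edge is in direct phase with $v$'s $j_1$-edge, and --- using the violation identity $(v\xor w)\wedge(O(v)\xor O(w))=I_{j_1}\xor I_{j_2}$ --- $z$'s $j_2$-edge is in direct phase with $w$'s $j_2$-edge. Since $H$ is a union of phases, both edges incident to $z$ must lie in $H$, contradicting the matching property. You would need to replace your propagation with an argument of this kind (or otherwise produce one vertex forced to be doubly matched); the $2$-face mechanism alone does not suffice.
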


\ifdefined\arxiv
\begin{proof}[Proof (from \cite{schurr2004phd})]
    We verify for every pair of vertices $u,v \in V(\Cube)$ that they fulfill the \SWC in $O\flip H$. Both $u$ and $v$ are each incident to at most one edge in $H$.
    
    If neither are incident to an edge in $H$, they trivially fulfill the \SWC in $O\flip H$ since they also did in $O$.
    
    If the one or two edge(s) of $H$ incident to $u$ and $v$ are all $i$-edges for some $i$, then the outmaps of $u$ and $v$ are the same in $O\flip H$ as in $O\flip (H\cap E_i)$, which is a USO by \Cref{obs:schurrpropositionSingleDim} since $H\cap E_i$ is a union of $i$-phases.

    We thus only have to consider the case where $u$ is incident to an $i$-edge of $H$ and $v$ is incident to a $j$-edge of $H$, for $i\neq j$
    and both $i$ and $j$ are within the face that $u$ and $v$ span, i.e.,
    $u_i \neq v_i$ and $u_j \neq v_j$.
    Let $w$ be the vertex in the face spanned by $u$ and $v$ with
    $O(w)\wedge(u\xor v)=(O(u)\neig i)\wedge (u\xor v)$; 
    This means 
    the $i$-edges incident to $u$ and $w$ are in direct phase.
    Such a vertex $w$ is guaranteed to exist since $O$ is a USO and thus a bijection on each face. 
    Now, assume that $u$ and $v$ violate the \SWC in $O\flip H$, i.e., $(u\xor v)\wedge ((O\flip H)(u) \xor (O\flip H)(v))=0^n$. Thus, we must have that in $O$, $(u\xor v)\wedge (O(u)\xor O(v))=I_i\xor I_j$. We can therefore see that the $j$-edges incident to $v$ and $w$ must be in direct phase too. Since $H$ is a union of phases, it must contain both the $i$-edge and the $j$-edge incident to $w$. This contradicts the assumption that $H$ is a matching. Thus, the lemma follows.
\end{proof}

On the other hand, the ``only if'' part can be phrased as follows:
\else
The ``only if'' direction can be phrased as follows:
\fi
\begin{lemma}[\Cref{prop:schurr}, ``only if'']\label{lem:schurrbackward}
Let $O$ be a USO and $H\subseteq E$ a matching. Then, if $O\flip H$ is a USO, $H$ is a union of phases of $O$.
\end{lemma}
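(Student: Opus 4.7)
My plan is to prove the contrapositive: if $H$ is not a union of phases of $O$, then $O \flip H$ is not a USO. First reduce to a single dimension: since $H$ is not a union of phases, some $H_i := H \cap E_i$ is not a union of $i$-phases, so by \Cref{obs:schurrpropositionSingleDim} the orientation $O \flip H_i$ is not a USO. Take a minimal face $f$ of $\Cube$ on which $O \flip H_i$ forms a PUSO; since only $i$-edges are flipped and $O$ itself is a USO, $f$ must span dimension $i$, i.e., $i \in \dimOfFace$.

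Next, I extract structure from this PUSO face. By \Cref{lem:PUSO}, every antipodal pair $(v,w)$ of $f$ has equal outmaps in $O \flip H_i$ within $\dimOfFace$. Translating back to $O$, the restriction of $O(v) \oplus O(w)$ to $\dimOfFace$ lies in $\{0, I_i\}$, and since $O$ is a USO this XOR must be nonzero, hence equals $I_i$. So exactly one of $v,w$ is an endpoint of an $H_i$-edge (within $f$), making $H_i \cap E(f)$ a matching of size $2^{|\dimOfFace|-2}$ covering one vertex from each antipodal pair of $f$. Now assume $O \flip H$ is a USO, and inspect SW at antipodal pairs of $f$ in $O \flip H$: for $v$ covered by $H_i$ and antipodal $w$, a short calculation shows SW holds iff $w$ is incident to some $H$-edge of dimension $j \in \dimOfFace \setminus \{i\}$. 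Therefore every vertex of $f$ must be covered by an $H$-edge within $f$, so $H \cap E(f)$ is a perfect matching on $V(f)$: the $H_i$-transversal together with a dimension-$\neq i$ matching $M \subseteq H \setminus H_i$ covering the remaining vertices.

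The main obstacle is now to derive a contradiction from this configuration. For $|\dimOfFace| = 2$, it is immediate: the two vertices uncovered by $H_i$ are the endpoints of the \emph{other} $i$-edge of $f$, and no non-$i$ edge of $f$ joins them, so $M$ cannot exist. For $|\dimOfFace| \geq 3$, such an $M$ can exist combinatorially, and one must show that the combined constraints---the PUSO structure on $f$ in $O \flip H_i$, the USO property of $O$, and the assumed USO property of $O \flip H$---are nevertheless incompatible. I expect this step to rely on further PUSO structural results from~\cite{bosshard2017pseudo}, likely through an induction on $|\dimOfFace|$ that identifies a proper sub-face on which $O \flip H$ fails SW, or via a direct lemma ruling out USOs obtained from a PUSO by flipping a matching of the described form.
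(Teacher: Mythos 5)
Your reduction to a single dimension, the choice of a minimal PUSO face $f$ for $O\flip H_i$, and the two consequences you draw from \Cref{lem:PUSO} --- that $H_i\cap E(f)$ covers exactly one vertex of every antipodal pair of $f$, and that the assumed USO-ness of $O\flip H$ forces every remaining vertex of $f$ to be covered by an $H$-edge of some dimension in $\dimOfFace\setminus\{i\}$ --- are all correct and in the same spirit as the paper's argument. But the step you defer is not a technicality; it is the heart of the proof, and the configuration you arrive at is genuinely harder to refute than the one the paper works with. The obstacle is that the dimension $j$ of the edge covering the antipodal partner $w$ may vary from pair to pair, so you cannot play two fixed dimensions against each other; and no further PUSO machinery from \cite{bosshard2017pseudo} beyond \Cref{lem:PUSO} is needed or used in the paper to get around this.

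The paper avoids the problem by minimizing \emph{before} looking for a PUSO. Two minimization lemmata (\Cref{lem:minimization1} and \Cref{lem:minimization2}) show that a counterexample, if one exists, can be chosen so that $H$ contains no complete phase of $O$ and $H\cap F$ is a union of phases in \emph{every} facet $F$. The second property forces $O\flip H_j$ to be a PUSO \emph{on the whole cube} for every dimension $j$ with $H_j\neq\emptyset$ (all facets remain USOs while the cube itself does not), so the antipodal-covering structure you derived for the single dimension $i$ holds globally and simultaneously for at least two distinct dimensions $i$ and $j$. Since $H$ is a matching, each antipodal pair $v,w$ then has one endpoint covered by $H_i$ and the other by $H_j$, giving $(v\xor w)\wedge(O(v)\xor O(w))=I_i$ and $=I_j$ at once, a contradiction. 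To salvage your route you would need an analogous dimension-reduction step forcing $f$ to be the whole cube and the covering dimension to be uniform across antipodal pairs; as written, the case $|\dimOfFace|\ge 3$ remains open and the proof is incomplete.
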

Schurr proves this direction by contraposition: Assume $H$ is not a union of phases. Then, there must be a phase $P$ and two edges $e,e'\in P$ such that $e\in H$ and $e'\not\in H$. While $e$ and $e'$ are not necessarily directly in phase, there must be a sequence of direct-in-phaseness relations starting at $e$ and ending in $e'$. At some point in this sequence, there must be two edges $e_i\in H$ and $e_{i+1}\not\in H$ that are in direct phase. Schurr then argues that since we flip only $e_i$ but not $e_{i+1}$, the vertices $v\in e_i$ and $w\in e_{i+1}$ certifying that these edges are in direct phase must violate the \SWC after the flip. Thus, $O\flip H$ would not be a USO. However, it is possible that  $w$ is incident to another edge of a dimension in $v\xor w$ that is contained in $H$, which would make $v$ and $w$ no longer violate the \SWC.

This issue seems very difficult to fix, since the core of the argument (the outmap of $w$ not being changed) is simply wrong. We thus opt to reprove \Cref{lem:schurrbackward} in a completely different way. We first need the following observation:

\begin{observation}
\label{obs:flippingNonAdjecentEdgesDoesNotAffectPhases}
Let \orientation be a USO and 
\Matching a union of phases in \orientation.
Let \Phase be a set of $i$-edges such that $\Matching\cap \Phase=\emptyset$ and $\Matching\cup \Phase$ is a matching. 
If \Phase is a phase in \orientation, it is a union of phases in $\orientation \flip \Matching$.
\end{observation}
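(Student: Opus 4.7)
The plan is to reduce the observation directly to the already-established \Cref{lem:schurrforward} (the ``if'' direction of \Cref{prop:schurr}) together with the single-dimension characterization \Cref{obs:schurrpropositionSingleDim}. The key algebraic identity is that, because $H$ and $P$ are disjoint sets of edges, flipping $H$ and then $P$ is the same as flipping $H\cup P$ in one go, i.e.\ $(\orientation\flip H)\flip P = \orientation\flip (H\cup P)$.

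Concretely, I would proceed as follows. Since $H$ is a union of phases of $\orientation$ and $P$ is itself a phase of $\orientation$ disjoint from $H$, the set $H\cup P$ is again a union of phases of $\orientation$. By hypothesis, $H\cup P$ is a matching, so \Cref{lem:schurrforward} gives that $\orientation\flip (H\cup P)$ is a USO. Equivalently, $(\orientation\flip H)\flip P$ is a USO. Applying the same lemma to $H$ alone shows that $\orientation\flip H$ is also a USO, so it makes sense to talk about $i$-phases in $\orientation\flip H$. Now, since $P\subseteq E_i$ lies entirely in one dimension, \Cref{obs:schurrpropositionSingleDim} applied within the USO $\orientation\flip H$ says that $P$ is a union of $i$-phases of $\orientation\flip H$ if and only if $(\orientation\flip H)\flip P$ is a USO, which is exactly what we verified above.

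I do not expect any real obstacle: the whole argument is a short chain of already-proven equivalences, and the only non-trivial ingredient (the ``if'' direction of Schurr's proposition for general matchings) has just been reproved. The one conceptual point to keep straight while writing is the difference between ``phase in $\orientation$'' and ``phase in $\orientation\flip H$'', since these are a priori different equivalence relations on $E_i$; the content of the observation is precisely that flipping another union of phases (in a different dimension, or anyway compatible with a matching) can only \emph{coarsen} the phase partition, never split a phase apart.
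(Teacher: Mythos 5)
Your proposal is correct and follows essentially the same route as the paper: apply \Cref{lem:schurrforward} to both $\Matching$ and $\Matching\cup\Phase$ to get two USOs differing exactly in the $i$-edge set $\Phase$, then invoke \Cref{obs:schurrpropositionSingleDim} within $\orientation\flip\Matching$. The only cosmetic difference is that you spell out the identity $(\orientation\flip \Matching)\flip \Phase = \orientation\flip(\Matching\cup\Phase)$, which the paper compresses into the phrase ``their difference is $\Phase$.''
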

\begin{proof}
    If \Phase is a phase in \orientation, by \Cref{lem:schurrforward}, both \orientation with \Matching flipped, and \orientation with $\Matching \cup \Phase$ flipped are USOs. Their difference is \Phase, and thus by \Cref{obs:schurrpropositionSingleDim}, \Phase is a union of phases in $\orientation \flip \Matching$.
\end{proof}

\ifdefined\arxiv
We prove \Cref{lem:schurrbackward} with the help of two \emph{minimization} lemmata, \cref{lem:minimization1,lem:minimization2}. Note that a \emph{counterexample to \Cref{lem:schurrbackward}} is a pair $(O,H)$ such that $O$ is a USO, $H$ is a matching that is not a union of phases in $O$, but $O\flip H$ is a USO nonetheless.

\begin{lemma}[Minimization Lemma 1]\label{lem:minimization1}
If there exists a counterexample $(\orientation^*,\Matching^*)$ to \cref{lem:schurrbackward}, then there also exists a counterexample $(\orientation,\Matching)$ with $dim(\orientation^*)=dim(\orientation)$ in which \Matching does \emph{not} contain a whole phase of \orientation.
\end{lemma}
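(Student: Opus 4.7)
The plan is to prove \Cref{lem:minimization1} by a minimization argument. Among all counterexamples $(\orientation, \Matching)$ to \Cref{lem:schurrbackward} whose cube has the same dimension as $\orientation^*$, I would select one with $|\Matching|$ minimum, and show that this $(\orientation, \Matching)$ already has the desired property. So suppose for contradiction that some $i$-phase $\Phase$ of $\orientation$ satisfies $\Phase \subseteq \Matching$, and set $\orientation' := \orientation \flip \Phase$ and $\Matching' := \Matching \setminus \Phase$.

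First I would verify that $(\orientation', \Matching')$ has the structural properties of a candidate counterexample. The orientation $\orientation'$ is a USO by \Cref{obs:schurrpropositionSingleDim}, $\Matching' \subseteq \Matching$ is a matching, and since $\Phase$ and $\Matching'$ are disjoint we have $\orientation' \flip \Matching' = \orientation \flip \Matching$, which is a USO. Furthermore $|\Matching'| < |\Matching|$ since $\Phase$ is nonempty. If $\Matching'$ is not a union of phases of $\orientation'$, then $(\orientation', \Matching')$ is itself a strictly smaller counterexample, contradicting minimality immediately.

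It remains to handle the case where $\Matching'$ is a union of phases of $\orientation'$. The key observation here is that, since flipping a union of $i$-phases preserves the $i$-phases, $\Phase$ remains an $i$-phase of $\orientation'$; hence $\Matching = \Matching' \cup \Phase$ is also a union of phases of $\orientation'$. By \Cref{lem:schurrforward} we obtain that $\orientation' \flip \Matching = \orientation \flip \Matching'$ is a USO. Now turning back to $\orientation$, consider the pair $(\orientation, \Matching')$: it has the USO-property just established, and if $\Matching'$ were a union of phases of $\orientation$, then $\Matching = \Matching' \cup \Phase$ would also be (using that $\Phase$ is a phase of $\orientation$), contradicting that $(\orientation, \Matching)$ is a counterexample. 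Hence $(\orientation, \Matching')$ is itself a counterexample of strictly smaller matching size, and minimality is violated once more.

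The subtle step --- and what I expect to be the main obstacle --- is the transition from ``$\Matching'$ is a union of phases of $\orientation'$'' to a statement about $\orientation$. Flipping an $i$-phase is only guaranteed to preserve $i$-phases, so the $j$-phases of $\orientation'$ for $j \neq i$ may genuinely differ from those of $\orientation$, and one cannot simply transport a full phase decomposition across. The detour through \Cref{lem:schurrforward} sidesteps this by first converting the phase-decomposition property in $\orientation'$ into the dimension-independent statement that $\orientation \flip \Matching'$ is a USO, and this is precisely what allows the minimality argument to close on the pair $(\orientation, \Matching')$.
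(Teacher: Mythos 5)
Your proof is correct, but it reaches the conclusion by a genuinely different route than the paper. The paper strips out \emph{all} phases of $\orientation^*$ fully contained in $\Matching^*$ in one step (their union $U$), and then must do real work to show that $U$ is still a union of phases of $\orientation = \orientation^*\flip U$ and that any phase of $\orientation$ inside the residual matching would transfer back to a union of phases of $\orientation^*$; this transfer relies on \Cref{obs:flippingNonAdjecentEdgesDoesNotAffectPhases} applied dimension by dimension. You instead minimize $|\Matching|$ over all same-dimensional counterexamples and remove a single contained phase $\Phase$ at a time, and you correctly isolate the real difficulty --- that flipping an $i$-phase is only known to preserve $i$-phases, so phase structure in other dimensions cannot be transported directly. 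Your workaround is the detour through \Cref{lem:schurrforward}: you convert ``$\Matching'$ is a union of phases of $\orientation'$'' into the dimension-independent fact that $\orientation'\flip\Matching = \orientation\flip\Matching'$ is a USO, and then argue about $(\orientation,\Matching')$ purely via the definition of a counterexample. This buys you a proof that never needs \Cref{obs:flippingNonAdjecentEdgesDoesNotAffectPhases} at all, at the cost of an extra case split and an extremal argument; the paper's version is more constructive (it exhibits the new counterexample explicitly as $(\orientation^*\flip U,\Matching^*\setminus U)$), which is marginally more informative but relies on more machinery. Both arguments correctly handle the degenerate situation $\Matching'=\emptyset$, since the empty set is a union of phases and your case analysis then yields the contradiction directly.
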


\begin{proof}
Let $U$ be the union of all phases \Phase of $\orientation^*$ that are fully contained in the matching, i.e., $\Phase \subseteq \Matching^*$. 
By \Cref{lem:schurrforward}, $\orientation:=\orientation^*\flip U$ is a USO. We denote by $\Matching$ the set $\Matching^*\setminus U$, which is a matching containing only incomplete sets of phases of $\orientation^*$.

We now argue that $(\orientation,\Matching)$ is a counterexample with the desired property. 
As $\Matching^*$ originally was not a union of phases, $\Matching\not=\emptyset$. Furthermore, $\orientation\flip \Matching$ is equal to $\orientation^*\flip\Matching^*$ and thus by assumption a USO. It remains to be proven that $\Matching$ is not a union of phases of $\orientation$, and in particular contains no phase completely. 

To do so, we first prove that $U$ is a union of phases in $\orientation$. One can see this by successively flipping in $\orientation^*$ the sets $U_i:=U\cap E_i$ which decompose $U$ into the edges of different dimensions. After flipping each set $U_{i}$, by \Cref{obs:flippingNonAdjecentEdgesDoesNotAffectPhases} all the other sets $U_{j}$ remain unions of phases. Furthermore, as flipping a union of $i$-phases does not change the set of $i$-phases, $U_{i}$ also remains a union of phases.

Now, by \Cref{obs:flippingNonAdjecentEdgesDoesNotAffectPhases}, any phase $\Phase \subseteq \Matching$ of $\orientation$ is a union of phases in $\orientation\flip U = \orientation^*$. But then, by definition of $U$, $P$ would have been included in $U$. Thus, we conclude that $\Matching$ does not contain any phase of $\orientation$.
\end{proof}

\begin{lemma}[Minimization Lemma 2]\label{lem:minimization2}
If there exists a counterexample to \Cref{lem:schurrbackward}, then there also exists a counterexample $(\orientation,\Matching)$ such that for each facet \facet of \orientation, $\Matching \cap \facet$ is a union of phases in \facet, and such that \Matching contains no phase of \orientation.
\end{lemma}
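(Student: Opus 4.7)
The plan is to induct on the dimension $\dimension$ of the cube, using \Cref{lem:minimization1} to enforce the ``no whole phase'' clause and a facet-restriction argument to enforce the facet clause. The base case is vacuous: in dimensions at most $2$ every matching lies within a single dimension, so \Cref{obs:schurrpropositionSingleDim} already rules out any counterexample.

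For the inductive step in dimension $\dimension$, I start with an arbitrary counterexample $(\orientation^*, \Matching^*)$ and apply \Cref{lem:minimization1} to obtain a counterexample $(\orientation, \Matching)$ in the same dimension in which \Matching contains no whole phase of \orientation. If for every facet \facet of \Cube the set $\Matching \cap \facet$ is already a union of phases in \facet, then $(\orientation, \Matching)$ is itself the desired counterexample and we are done.

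Otherwise, I choose a facet \facet for which $\Matching \cap \facet$ fails to be a union of phases in \facet, and observe that the pair $(\orientation|_\facet, \Matching \cap \facet)$ is itself a counterexample to \Cref{lem:schurrbackward} in dimension $\dimension-1$. Indeed, $\orientation|_\facet$ is a USO as a face of a USO; $\Matching \cap \facet$ is a matching as a subset of \Matching; the restriction identity $\orientation|_\facet \flip (\Matching \cap \facet) = (\orientation \flip \Matching)|_\facet$ shows that the flipped restricted orientation is a USO; and by the choice of \facet, $\Matching \cap \facet$ is not a union of phases in \facet. The inductive hypothesis applied to this smaller counterexample then delivers, in some dimension strictly less than \dimension, a counterexample satisfying both desired properties, which is exactly what the lemma asserts.

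The main conceptual hazard is to keep the two notions of ``phase'' cleanly separated: phases of \orientation versus phases of $\orientation|_\facet$, where the latter can be strictly finer than the restrictions of the former to \facet. Once that distinction is in place, the argument becomes pure bookkeeping; in particular, no extra work is required to combine \Cref{lem:minimization1} with the facet property across dimensions, since each invocation of the inductive hypothesis internally re-invokes \Cref{lem:minimization1} on its own.
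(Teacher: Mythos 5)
Your proposal is correct and follows essentially the same route as the paper's proof: the paper picks a smallest-dimensional counterexample among those with the ``no whole phase'' property guaranteed by \Cref{lem:minimization1} and derives a contradiction from a bad facet, which is just the minimal-counterexample phrasing of your induction on dimension with facet restriction. Your explicit remarks on the restriction identity and on phases of $\orientation|_\facet$ being possibly finer than restricted phases of \orientation are accurate and consistent with what the paper leaves implicit.
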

\begin{proof}
    Let $(\orientation,\Matching)$ be a smallest-dimensional counterexample to \Cref{lem:schurrbackward} among all counterexamples $(\orientation^*,\Matching^*)$ where $\Matching^*$ contains no phase of $\orientation^*$. By \Cref{lem:minimization1}, at least one such counterexample must exist, thus $(\orientation,\Matching)$ is well-defined. Let $n$ be the dimension of $\orientation$.
    
    We now prove that $H\cap F$ is a union of phases in $F$ for all facets $F$: If this would not be the case for some $F$, then constraining \orientation and \Matching to \facet would yield a counterexample $(\orientation_\facet,\Matching_\facet)$ of \Cref{lem:schurrbackward} of dimension $n-1$. By applying \Cref{lem:minimization1}, this counterexample can also be turned into a $(n-1)$-dimensional counterexample $(\orientation_\facet',\Matching_\facet')$ such that $\Matching_\facet'$ contains no phase of $\orientation_\facet'$. This is a contradiction to the definition of $(\orientation,\Matching)$ as the smallest-dimensional counterexample with this property. We conclude that $(\orientation,\Matching)$ is a counterexample with the desired properties.
\end{proof}

We are finally ready to prove \Cref{lem:schurrbackward}, and thus also \Cref{prop:schurr}.
\else
We prove \Cref{lem:schurrbackward} with the help of the following \emph{minimization} lemma, the proof of which can be found in the appendix. Note that a \emph{counterexample to \Cref{lem:schurrbackward}} is a pair $(O,H)$ such that $O$ is a USO, $H$ is a matching that is not a union of phases in $O$, but $O\flip H$ is a USO nonetheless.

\begin{lemma}[Minimization Lemma]\label{lem:minimization2}
If there exists a counterexample to \Cref{lem:schurrbackward}, then there also exists a counterexample $(\orientation,\Matching)$ such that for each facet \facet of \orientation, $\Matching \cap \facet$ is a union of phases in \facet, and such that \Matching contains no phase of \orientation.
\end{lemma}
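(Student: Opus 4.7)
The plan is to prove \Cref{lem:minimization2} via a two-stage minimization. In the first stage I would show that any counterexample $(O^*, H^*)$ can be transformed, without changing the dimension, into a counterexample $(O, H)$ in which $H$ contains no complete phase of $O$. Then, among all counterexamples with this ``no complete phase'' property, I would pick one of minimum dimension and argue that its restriction to each facet must already be a union of phases there.

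For the first stage, I would let $U$ be the union of all phases of $O^*$ that are entirely contained in $H^*$, and set $O := O^* \flip U$ and $H := H^* \setminus U$. By \Cref{lem:schurrforward}, $O$ is a USO, and $O \flip H = O^* \flip H^*$ is a USO by assumption on $(O^*, H^*)$. Moreover $H$ is nonempty, since $H^*$ itself was not a union of phases. The key claim is that $U$ is a union of phases of $O$ as well; the natural way to see this is to decompose $U = \bigcup_i U_i$ with $U_i := U \cap E_i$ and flip these sets one dimension at a time starting from $O^*$, repeatedly applying \Cref{obs:flippingNonAdjecentEdgesDoesNotAffectPhases} together with the fact that flipping a union of $i$-phases leaves the family of $i$-phases unchanged. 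Once this is established, any phase $P$ of $O$ contained in $H$ would, by \Cref{obs:flippingNonAdjecentEdgesDoesNotAffectPhases} applied in the reverse direction, be a union of phases of $O \flip U = O^*$, so the maximality of $U$ would force $P \subseteq U$, contradicting $P \subseteq H = H^* \setminus U$. Thus $(O, H)$ has the desired ``no complete phase'' property.

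For the second stage, I would pick a counterexample $(O, H)$ of minimum dimension $n$ among those produced by the first stage. Suppose for contradiction that some facet $F$ of $O$ satisfies that $H \cap F$ is not a union of phases in the induced USO on $F$. Since $O \flip H$ is a USO, its restriction to $F$, namely $(O|_F) \flip (H \cap F)$, is also a USO, so $(O|_F, H \cap F)$ is an $(n-1)$-dimensional counterexample to \Cref{lem:schurrbackward}. Applying the first-stage reduction to it yields an $(n-1)$-dimensional counterexample with no complete phase, contradicting the minimality of $n$.

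The hard part will be the first stage, and specifically the claim that $U$ remains a union of phases after the flip that produces $O$. This argument hinges on the matching hypothesis: the sets $U_i$ in distinct dimensions are pairwise non-adjacent in the sense of \Cref{obs:flippingNonAdjecentEdgesDoesNotAffectPhases}, which is exactly what that observation needs in order to conclude that each $U_j$ survives the flipping of the other $U_i$'s as a union of phases. Without the matching property this dimension-by-dimension peeling would break down, and one would lose control over what $U$ looks like in the new orientation.
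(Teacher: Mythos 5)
Your proposal is correct and follows essentially the same route as the paper: the first stage is exactly the paper's auxiliary (weak) minimization lemma, including the construction of $U$, the dimension-by-dimension peeling via \Cref{obs:flippingNonAdjecentEdgesDoesNotAffectPhases}, and the maximality argument, and the second stage is the paper's minimal-dimension argument applied to facet restrictions. No gaps; your emphasis on the matching hypothesis being what makes the peeling work matches the paper's reliance on that same observation.
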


With this lemma we are ready to prove \Cref{lem:schurrbackward}, and thus also \Cref{prop:schurr}.
\fi

\begin{proof}[Proof of \Cref{lem:schurrbackward}]
    By \Cref{lem:minimization2} it suffices to show that there exists no counterexample $(\orientation,\Matching)$ to this lemma such that \Matching contains no phase of \orientation and in each facet \facet of \orientation, $\Matching\cap\facet$ is a union of phases.

    Assume that such a counterexample $(\orientation,\Matching)$ exists. For each dimension $i \in [\dimension]$ for which $\Matching_i:=\Matching\cap E_i$ is non-empty we consider the orientation $\orientation_i := \orientation \flip \Matching_i$. By assumption, $\Matching_i$ is not a union of phases of \orientation, and thus $\orientation_i$ is not a USO. Furthermore, as $\Matching_i$ is a union of phases in each facet of \orientation, all facets of $\orientation_i$ are USOs. Thus, $\orientation_i$ is a PUSO. Recall that by \Cref{lem:PUSO}, in a PUSO, every pair of antipodal vertices has the same outmap. For $\orientation_i$ to be a PUSO, and \orientation to be a USO, exactly one vertex of each antipodal pair of vertices must be incident to an edge in $\Matching_i$.

    We know that \Matching contains edges of at least two dimensions, $i$ and $j$. Consider $\Matching_i$ and $\Matching_j$. By the aforementioned argument, both $\orientation_i$ and $\orientation_j$ are PUSO. As \Matching is a matching, there is no vertex incident to both an edge of $\Matching_i$ and $\Matching_j$. Therefore, for each pair of antipodal vertices $\nodeA, \nodeB$, one is incident to an edge of $\Matching_i$, and one to an edge of $\Matching_j$. 
    Since \nodeA and \nodeB must have the same outmaps in both $\orientation_i$ and $\orientation_j$, we get the following two conditions:
    \begin{enumerate}
        \item $(\nodeA \xor \nodeB ) \wedge (\orientation(\nodeA) \xor \orientation(\nodeB)) = I_i,$ and 
        \item $(\nodeA \xor \nodeB ) \wedge (\orientation(\nodeA) \xor \orientation(\nodeB)) = I_j.$
    \end{enumerate}
    Clearly, this implies $I_i=I_j$ and we have thus obtained a contradiction, and no counterexample can exist to \Cref{lem:schurrbackward}.
\end{proof}

Now that we have recovered \Cref{prop:schurr}, we can also strengthen 
\Cref{obs:flippingNonAdjecentEdgesDoesNotAffectPhases}:

\begin{theorem}
\label{lem:nonAdjacentEdgesAreNotAffectedByPhaseflips}
Let \orientation be a USO and
\Matching a union of phases in \orientation.
Let \Phase be a set of $i$-edges such that $\Matching\cap \Phase=\emptyset$ and $\Matching\cup \Phase$ a matching. 
Then, \Phase is a phase in \orientation if and only if it is a phase in $\orientation' := \orientation \flip \Matching$.
\end{theorem}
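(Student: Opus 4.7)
The plan is to leverage the newly proven matching version of Schurr's proposition (\Cref{prop:schurr}) to upgrade the one-directional \Cref{obs:flippingNonAdjecentEdgesDoesNotAffectPhases} into the claimed equivalence. Throughout the proof, write $H_i := \Matching \cap E_i$ for the $i$-edges of \Matching, and recall that $\orientation' := \orientation \flip \Matching$ is a USO by \Cref{lem:schurrforward}.

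For the forward direction, assume \Phase is an $i$-phase of \orientation. \Cref{obs:flippingNonAdjecentEdgesDoesNotAffectPhases} already tells us that \Phase is a union of $i$-phases of $\orientation'$. Suppose for contradiction that this union is non-trivial, and pick a non-empty strict subset $\Phase_1 \subsetneq \Phase$ which is itself a union of $i$-phases of $\orientation'$. By \Cref{obs:schurrpropositionSingleDim}, $\orientation' \flip \Phase_1 = \orientation \flip (\Matching \cup \Phase_1)$ is a USO. Since $\Phase_1 \subseteq \Phase$ and $\Matching \cup \Phase$ is a matching by hypothesis, $\Matching \cup \Phase_1$ is also a matching. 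Applying \Cref{prop:schurr} to \orientation, we conclude that $\Matching \cup \Phase_1$ is a union of phases of \orientation; intersecting with $E_i$ gives that $H_i \cup \Phase_1$ is a union of $i$-phases of \orientation. Since $H_i$ is itself a union of $i$-phases disjoint from $\Phase_1$, and unions of $i$-phases form a Boolean algebra within $E_i$ (phases being equivalence classes), this forces $\Phase_1$ to be a union of $i$-phases of \orientation. But $\emptyset \neq \Phase_1 \subsetneq \Phase$ contradicts \Phase being a single $i$-phase of \orientation.

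The backward direction is symmetric once we verify that \Matching is a union of phases of $\orientation'$ as well. But this is immediate from \Cref{prop:schurr} applied to $\orientation'$: the orientation $\orientation' \flip \Matching = \orientation$ is a USO, and \Matching is a matching, hence \Matching is a union of phases of $\orientation'$. Thus the roles of \orientation and $\orientation'$ are interchangeable, and repeating the previous paragraph with them swapped establishes that a phase \Phase of $\orientation'$ must also be a single phase of \orientation.

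The main obstacle is really the forward direction: \Cref{obs:flippingNonAdjecentEdgesDoesNotAffectPhases} only guarantees that \Phase does not \emph{split across} different phases of $\orientation'$, not that it remains indivisible. Ruling out a finer splitting needs the full strength of the matching version of \Cref{prop:schurr}, which is precisely why this sharpening only becomes available after the work of the preceding subsection. The remaining bookkeeping --- checking that $\Matching \cup \Phase_1$ is still a matching, and that one may subtract $H_i$ from a union of $i$-phases to obtain another union of $i$-phases --- is routine once phases are viewed as the equivalence classes they are.
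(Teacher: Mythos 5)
Your proof is correct and follows essentially the same route as the paper: both arguments rest on the matching version of Schurr's proposition (\Cref{prop:schurr}) together with \Cref{obs:flippingNonAdjecentEdgesDoesNotAffectPhases}, and both conclude by exhibiting a non-empty proper subset of $P$ that would have to be a union of phases of \orientation, contradicting that $P$ is a single phase. The only cosmetic difference is that the paper first isolates the two-sided ``union of phases'' equivalence and applies it to the sub-phase, whereas you inline that step by applying \Cref{prop:schurr} to the matching $\Matching\cup\Phase_1$ and subtracting $\Matching\cap E_i$.
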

\begin{proof}
    We first prove that $P$ is a \emph{union of} phases in \orientation if and only if it is a \emph{union of} phases in $\orientation'$. By \Cref{prop:schurr}, \Matching is a union of phases in both \orientation and $\orientation'$. Thus, this follows from \Cref{obs:flippingNonAdjecentEdgesDoesNotAffectPhases}.

    Now, assume for a contradiction that $P$ is a single phase in \orientation but a union of multiple phases in $\orientation'$ (the other case can be proven symmetrically): Then, let $P'\subset P$ be a phase of $\orientation'$. By the statement proven above, $P'$ is a union of phases in \orientation. However, $P'\subset P$ and $P$ is a single phase. This yields a contradiction, thus the lemma follows.
\end{proof}

We thus conclude that every phase remains a phase when flipping some matching that is not adjacent to any edge of the phase.

\subsection{The \texorpdfstring{$n$}{n}-Schurr Cube}\label{sec:dSchurrCube}
When trying to find an efficient algorithm for computing phases, one might
ask the following: Is there some small integer $k(n)$, such that for every $n$-dimensional USO, the transitive closure of the direct-in-phaseness relation stays the same when only considering the relation between pairs of $i$-edges which have a distance of at most $k(n)$ to each other in $N_i$?
In other words, does it suffice to compute the direct-in-phaseness relationships only for edges that are close to each other, instead of for all pairs of $i$-edges?

In this section we will show that for every $n$ there exists an $n$-dimensional USO in which a direct-in-phase relationship between some antipodal $i$-edges is necessary to define some $i$-phase, i.e., we show that no such $k(n)<n-1$ exists.
Such a USO was found by Schurr~\cite{schurr2004phd} for $n=3$ and is shown in \Cref{fig:3DSchurr}: All four vertical edges are in phase, but if only direct-in-phaseness between non-antipodal edges is considered, there would be no connection between the front and the back facet.

\begin{figure}[htb]
\centering
    \ifdefined\arxiv
    \includegraphics{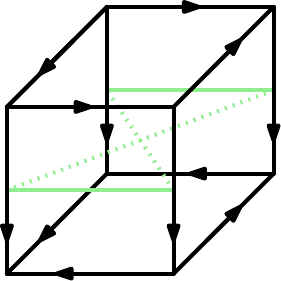}
    \else
    \includegraphics[scale=0.6]{img/3schurr.pdf}
    \fi
\caption{The $3$-Schurr Cube. The vertical edges are all in phase. The direct-in-phase relationships between these edges are marked in green. Note that when disregarding the direct-in-phase relationships of antipodal edges (dotted), this phase splits in two parts.}
\label{fig:3DSchurr}
\end{figure}

We generalize the properties of this 3-Schurr cube to the \dimension-Schurr cube, i.e., we show that there exists an $n$-dimensional USO such that
\begin{itemize}
    \item all $n$-edges are in phase,
    \item all $n$-edges are in direct phase with their antipodal $n$-edge, and
    \item if ignoring the $(n-1)$-direct-in-phaseness, the phase splits apart.
\end{itemize}

We can obtain a cube fulfilling this in a recursive fashion:
\begin{definition}[The $n$-Schurr cube]\label{def:nschurr}
    Let $S^1$ be the $1$-dimensional USO consisting of a single edge oriented towards $0$. For $n\geq 2$, let $S^n$ be the cube obtained by placing $S^{n-1}$ in the lower $n$-facet, and $S^{n-1}\flip E_{n-1}$ in the upper $n$-facet, with all $n$-edges oriented towards the lower $n$-facet.
\end{definition}
Alternatively, we can define the same cube without recursion, simplifying its analysis:
\[\forall v\in \{0,1\}^n:\; S^n(v)_i=\begin{cases}
    v_i\xor v_{i+1} & \text{ for } i<n,\\
    v_n & \text{ for } i=n.
\end{cases}\]

An example of this cube for $n=4$ can be seen in \Cref{fig:4DSchurr}.
This cube fulfills the properties outlined above:

\begin{lemma}
    \label{lem:dschurr}
    In $S^n$ as defined in \Cref{def:nschurr}, all $n$-edges are in direct phase with the antipodal edge (certified by both pairs of antipodal endpoints), and all $n$-edges are in phase. No $n$-edge  is in direct phase with any non-antipodal $n$-edge located in the opposite $1$-facet.
\end{lemma}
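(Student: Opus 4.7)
The proof will work with the closed form $S^n(v)_j = v_j \xor v_{j+1}$ for $j<n$ and $S^n(v)_n = v_n$, parametrising the $n$-edges by their lower endpoint: $e_u := \{(u,0),(u,1)\}$ for $u \in \{0,1\}^{n-1}$. All three assertions then become statements about the difference vector $d := u \xor w$ for pairs of $n$-edges $e_u,e_w$.

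For the antipodal claim, note that for every $j<n$, complementing both $v_j$ and $v_{j+1}$ leaves $v_j\xor v_{j+1}$ unchanged, so $S^n(v)_j = S^n(\bar v)_j$. Hence the antipodal pair $(v,\bar v)$ certifies direct phase between the two antipodal $n$-edges, and the same cancellation applied to $(v\neig n,\bar v\neig n)$ shows the other antipodal endpoint-pair certifies it too, yielding the first claim. The only case needing a sanity check is $j=n-1$, where the outmap really does depend on the flipped $n$-th bit, but both of $v_{n-1}$ and $v_n$ change between the antipodes, so the $\xor$ still cancels.

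For the remaining two claims I first characterise non-antipodal direct phase. Since the witnesses must lie on opposite $n$-sides, only the pairs $((u,0),(w,1))$ and $((u,1),(w,0))$ need inspection. Writing out both outmaps, both pairs impose the same condition: for every $j<n-1$ with $d_j=1$ the equality $u_j\xor u_{j+1} = w_j\xor w_{j+1}$, i.e.\ $d_j = d_{j+1}$, must hold; the coordinate $j=n-1$ is ``free'' because the $\xor 1$ contributed by the flipped $n$-th bit is absorbed exactly when $d_{n-1}=1$. The solutions are thus precisely the suffix-of-ones vectors $d = 0^a 1^{n-1-a}$ with $a\in\{0,1,\ldots,n-2\}$, where $a=0$ is antipodality. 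From this, both remaining claims are quick: edges $e_u,e_w$ lie in opposite $1$-facets precisely when $d_1=1$, and together with the suffix-of-ones shape plus non-antipodality ($d\neq 1^{n-1}$) no such $d$ exists, proving the third claim. For the second claim, the set $D := \{0^a 1^{n-1-a} : 0\le a\le n-2\}$ consists of $n-1$ vectors with pairwise distinct leading-$1$ positions $1,\ldots,n-1$, hence is $\mathbb{F}_2$-linearly independent and spans $\mathbb{F}_2^{n-1}$; the Cayley graph on $\{0,1\}^{n-1}$ with generator set $D$ is therefore connected, and by transitivity all $n$-edges lie in one $n$-phase. The main bookkeeping obstacle throughout is coordinate $n-1$, where the outmap formula straddles the flipped $n$-th bit and one has to verify carefully that the extra $\xor 1$ terms cancel in the intended way.
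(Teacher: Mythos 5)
Your proof is correct. The first and third claims are handled essentially as in the paper: both arguments work from the closed form, reduce direct-phaseness of two $n$-edges to a condition on the difference vector of the relevant endpoint pair, and arrive at the same characterization --- the difference vector restricted to the first $n-1$ coordinates must be a suffix of ones --- from which antipodal certification (by both opposing endpoint pairs) and the impossibility of non-antipodal cross-facet certificates follow at once. The one genuine divergence is the final claim that all $n$-edges form a single phase. The paper observes that the lower $1$-facet of $S^n$ is a copy of $S^{n-1}$ and inducts on the recursive definition, stepping from the upper $1$-facet to the lower one via antipodal certificates. You instead note that the $n-1$ admissible difference vectors $0^a1^{n-1-a}$ have pairwise distinct leading-one positions, hence are linearly independent and span $\{0,1\}^{n-1}$ over the two-element field, so the Cayley graph they generate on the $n$-edges is connected. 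Your route is self-contained given the closed form (it never needs \Cref{def:nschurr} itself, and as a bonus bounds the length of a direct-phase chain between any two $n$-edges by $n-1$), while the paper's induction is shorter but silently relies on the equivalence of the recursive and closed-form descriptions. Both are sound.
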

\ifdefined\arxiv
\begin{proof}
    We first see that every pair $v,w$ of antipodal vertices certifies their incident $n$-edges to be in direct phase, since for all $i<n$, $v_i\xor v_{i+1}=w_i\xor w_{i+1}$ and thus $S^n(v)_i=S^n(w)_i$.

    Next, we show that no $n$-edge is in direct phase with a non-antipodal $n$-edge in the opposite $1$-facet. Let $v$ be any vertex and $w$ a vertex such that $w_{1}\neq v_{1}$, $w_n\neq v_n$, but $v_i=w_i$ for some $i$. Let $i'$ be the minimum among all $i$ with $v_i=w_i$. Note that $i'>1$. Then, we have $v_{i'-1}\neq w_{i'-1}$, but we also have $S^n(v)_{i'-1}=v_{i'-1}\xor v_{i'} \neq w_{i'-1}\xor w_{i'}=S^n(w)_{i'-1}$, and thus the $n$-edges incident to $v$ and $w$ are not in direct phase.

    By a similar argument one can see that two vertices $v$ and $w$ certify their incident $n$-edges to be in direct phase if and only if there exists some integer $1<k\leq n$ such that $v_i=w_i$ for all $i<k$, and $v_i\neq w_i$ for all $i\geq k$. From this, it is easy to see that all $n$-edges are in phase: The $n$-edges in the upper $1$-facet are each in direct phase with some edge in the lower $1$-facet. The lower $1$-facet is structured in the same way as the cube $S^{n-1}$, thus we can inductively see that all $n$-edges in this facet are in phase. Therefore, all $n$-edges of $S^n$ are in phase.
\end{proof}
\else
The proof of this lemma can be found in the appendix.
\fi

\begin{figure}[htb]
\centering
\begin{subfigure}{0.69\textwidth}
\ifdefined\arxiv
\includegraphics[keepaspectratio,width=.9\textwidth]{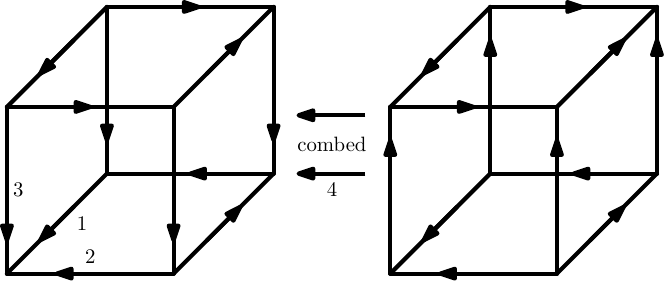}
\else
\includegraphics[keepaspectratio,width=.7\textwidth]{img/4schurr.pdf}
\fi
\end{subfigure}
\begin{subfigure}{0.3\textwidth}
\ifdefined\arxiv
\includegraphics[keepaspectratio,width=.9\textwidth]{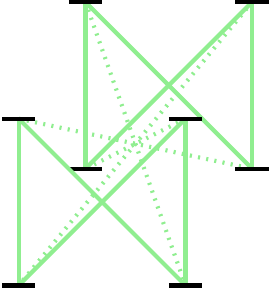}
\else
\includegraphics[keepaspectratio,width=.7\textwidth]{img/4schurrPhases.pdf}
\fi
\end{subfigure}
\caption{The $4$-Schurr Cube. The combed edges between the two pictured $4$-facets are in one phase. The direct-in-phase relationships between these edges is shown on the right. Note that when disregarding the direct-in-phase relationships of antipodal edges (dotted), the phase splits in two parts.}
\label{fig:4DSchurr}
\end{figure}

\section{An Improved Algorithm to Compute Phases}\label{sec:ComputationOfPhases}

The definitions of direct-in-phaseness and phases (\Cref{def:DIP,def:phases}) naturally imply a simple algorithm to compute all phases of a USO: Compare every pair of vertices and record the edges that are in direct phase, then run a connected components algorithm on the graph induced by these direct-in-phase relationships. This takes $O(4^n)$ time for an $n$-dimensional cube. As we will see, we can do better.

Based on \Cref{obs:schurrpropositionSingleDim} we get a natural connection between USO recognition and computation of phases; if $O$ is a USO and $H\subseteq E_i$ a set of $i$-edges, then $H$ is a union of phases if and only if $O\flip H$ is a USO. However, using USO recognition as a black-box algorithm would be highly inefficient for computing phases (as opposed to testing whether some set is a phase), since we would have to check whether $O\flip H$ is a USO for many different candidate sets $H$. We will see that a single run of an USO recognition algorithm suffices to be able to compute all phases.

To achieve this, we can profit from the similarity of the \SWC (\Cref{lem:szabowelzl}) and the condition for being in direct phase (\Cref{def:DIP}). Let $\mathcal{A}$ be an algorithm for USO recognition that tests the \SWC for some subset $T$ of all vertex pairs $\binom{V(Q_n)}{2}$, and outputs that the given orientation is a USO if and only if all pairs in $T$ fulfill the \SWC. We will show that then there exists an algorithm $\mathcal{B}$ for computing all phases of a given USO that also only compares the vertex pairs $T$.
Our phase computation algorithm $\mathcal{B}$ is based on the following symmetric relation:
\begin{definition}
Let $T\subseteq \binom{V(Q_n)}{2}$. Two $i$-edges $e,e'$ are \emph{in direct $T$-phase}, if
\begin{itemize}
    \item $e$ and $e'$ are in direct phase and
    \item there exist $v \in e$, $v' \in e'$ such that $v,v'$ certify $e,e'$ to be in direct phase, and $\{v,v'\}\in T$.
\end{itemize}
\end{definition}

\begin{lemma}\label{lem:blerbs}
Let $T\subseteq \binom{V(Q_n)}{2}$ be the set of vertex pairs of a USO recognition algorithm~$\mathcal{A}$. In every USO $O$, the transitive closure of direct-in-$T$-phaseness is equal to the transitive closure of direct-in-phaseness.
\end{lemma}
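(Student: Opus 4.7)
The plan is to prove this by contradiction, using the fact that $\mathcal{A}$ correctly rejects non-USOs while examining only vertex pairs in $T$. One direction is immediate: since direct-in-$T$-phase implies direct-in-phase, every $T$-phase (equivalence class under the transitive closure of direct-in-$T$-phaseness) is contained in a phase. It therefore suffices to prove that no phase splits into multiple $T$-phases.

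Suppose for contradiction that some $i$-phase $P$ decomposes into at least two $T$-phases, and let $P'$ be one of them, so $\emptyset \neq P' \subsetneq P$. Since $P'$ is a proper subset of a single phase, $P'$ is not a union of phases, and by \Cref{obs:schurrpropositionSingleDim} the orientation $O' := O \flip P'$ is not a USO. Now run the recognition algorithm $\mathcal{A}$ on $O'$: since $\mathcal{A}$ is correct and only inspects vertex pairs in $T$, there must exist a pair $\{v,w\} \in T$ that violates the \SWC in $O'$. Because $O$ itself is a USO, the same pair satisfies the \SWC in $O$.

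The next step is to extract from this pair a direct-in-$T$-phase relation between an edge in $P'$ and one outside of $P'$. Since $P' \subseteq E_i$, the outmaps of $O$ and $O'$ differ only in coordinate~$i$, and only at vertices incident to an edge in $P'$. Comparing the \SWC before and after the flip on the pair $\{v,w\}$, the dimension $i$ must lie in $v \oplus w$, $v$ and $w$ must disagree in coordinate $i$ of their outmap in $O$ but agree on it in $O'$, and they must agree on all other coordinates of $v\oplus w$ already in $O$. The first and third conditions are precisely the direct-in-phaseness condition (\Cref{def:DIP}) for the two incident $i$-edges $e_v := \{v, v \neig i\}$ and $e_w := \{w, w \neig i\}$, certified by the pair $\{v,w\} \in T$. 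The second condition forces the flip to change the orientation of exactly one of $e_v, e_w$, so exactly one of them lies in $P'$.

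We have thus produced two edges $e_v, e_w$ that are in direct $T$-phase, yet lie in different $T$-phases (one in $P'$, one not), contradicting the definition of $T$-phases as the equivalence classes of direct-in-$T$-phaseness. Hence no phase can split, and the two transitive closures coincide. The main subtlety I expect is the case analysis when extracting the direct-in-phase relation: one must verify that $|v\oplus w| \geq 2$ (pairs along a single edge automatically satisfy the \SWC and cannot be the offending pair) and that the change in outmaps from $O$ to $O'$ is confined to coordinate $i$, so that the \SWC can fail after the flip only in the very specific configuration described above.
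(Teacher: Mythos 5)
Your proof is correct and follows essentially the same route as the paper's: flip a $T$-phase that is a proper subset of a phase, invoke \Cref{obs:schurrpropositionSingleDim} to conclude the result is not a USO, and use the correctness of $\mathcal{A}$ to extract a violating pair in $T$ that certifies a direct-in-$T$-phase relation crossing the boundary of the $T$-phase, a contradiction. Your write-up simply spells out in more detail the step the paper compresses into ``Clearly, exactly one of the $i$-edges incident to $v,v'$ is contained in $B$ \dots these edges are in direct phase in $O$.''
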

\begin{proof}
Clearly, by definition, the equivalence classes of direct-in-$T$-phaseness (called \emph{$T$-phases}) are a refinement of phases. We now show that every $T$-phase is also a phase. Assume there is a $T$-phase $B$ (in dimension $i$) that is a strict subset of a phase $P$. Then, by \Cref{obs:schurrpropositionSingleDim}, $O\flip B$ is not a USO. Algorithm $\mathcal{A}$ must be able to detect this. Thus, there is a vertex pair $\{v,v'\}\in T$ that violates the \SWC in $O\flip B$. Clearly, exactly one of the $i$-edges incident to $v,v'$ is contained in $B$. However, these edges are in direct phase in $O$, and thus they are also in direct $T$-phase. We conclude that both edges should be in the same $T$-phase, and thus have a contradiction.
\end{proof}

With this lemma, we can turn the USO recognition algorithm $\mathcal{A}$ into a phase computation algorithm $\mathcal{B}$:
For each pair of vertices in $T$, find the edges they certify to be in direct $T$-phase. Then, calculate the connected components.

The best known USO recognition algorithm --- the one based on the work of Bosshard and Gärtner on PUSOs~\cite{bosshard2017pseudo} --- uses a set $T$ of size $|T|=3^n$: this follows from the fact that \Cref{lem:PUSO} implies that for each face, only the minimum and maximum vertex need to be compared.
Thus, we can also calculate the phases in $O(3^n)$ time, and any further advances in USO recognition will also imply further improvements in phase computation.

\ifdefined\arxiv
The fact that \Cref{lem:blerbs} holds for \emph{all} valid USO recognition algorithms may also be used to derive some structural results on phases. For example, \Cref{lem:blerbs} applied to the $2^n$ versions of the PUSO-based USO recognition algorithm (each version specified by the vertex $v$ which is interpreted as the minimum vertex of the cube) implies the following lemma:

\begin{lemma}
\label{lem:dist-k-dip}
Let $O$ be an $n$-dimensional USO and let $P$ be an $i$-phase. If $P$ would split apart if only direct-in-phase relationships of edges of distance $<n-1$ are considered, then all antipodal vertices must certify their incident $i$-edges to be in direct phase, and $P=E_i$.
\end{lemma}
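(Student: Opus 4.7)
The plan is to apply Lemma~\ref{lem:blerbs} separately to each of the $2^n$ versions of the PUSO-based USO recognition algorithm, exploiting the fact that each such version contains only a single globally antipodal pair in its testing set. The key preliminary observation I would establish is that if $(v,w)$ certifies that two $i$-edges are in direct phase, then $v$ and $w$ lie on opposing sides of their respective $i$-edges, so $i \in v\xor w$, and hence the distance in $\neighborhoodGraph_i$ between these edges equals $|v\xor w|-1$. Consequently, any direct-in-phase relationship between $i$-edges at distance exactly $n-1$ in $\neighborhoodGraph_i$ can only be certified by a pair of globally antipodal vertices in $\Cube$.

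For each vertex $v$, I would consider the PUSO-based recognition algorithm $\mathcal{A}_v$ obtained by relabeling coordinates via $x \mapsto x \xor v$. Its comparison set $T_v$ has size $3^n$ and consists, for each face $f$ of $\Cube$, of the pair of vertices in $f$ closest to and farthest from $v$ in Hamming distance; these two vertices are always antipodal within $f$. A pair in $T_v$ is globally antipodal in $\Cube$ iff the face $f$ spans all $n$ dimensions, so the only globally antipodal pair in $T_v$ is $(v, v \xor 1^n)$. By Lemma~\ref{lem:blerbs}, $P$ is a single $T_v$-phase, so the direct-in-$T_v$-phase graph restricted to $P$ is connected.

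Now suppose $P$ splits into $k \geq 2$ components when restricted to distance-$(<n-1)$ direct-in-phase relationships. For any $v$, the non-globally-antipodal pairs in $T_v$ certify only distance-$(<n-1)$ direct-in-phase relationships, so the direct-in-$T_v$-phase graph on $P$ is a subgraph of that $k$-component graph together with at most one extra edge arising from the pair $(v, v\xor 1^n)$. Connectedness forces $k = 2$, and the pair $(v, v\xor 1^n)$ must certify its incident $i$-edges $\{v, v\neig i\}$ and $\{v\xor 1^n, (v\xor 1^n)\neig i\}$ to be in direct phase, bridging the partition $P = A \sqcup B$. Since this holds for every $v$, every antipodal pair certifies their incident $i$-edges as direct-in-phase, giving the first conclusion; moreover, both incident $i$-edges always lie in $P$, and since every $i$-edge of $\Cube$ is incident to some vertex, we obtain $P = E_i$, giving the second conclusion. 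The main subtlety will be the combinatorial bookkeeping showing that the single available long-range edge for each $v$ must precisely bridge the two halves $A$ and $B$, which in turn rules out any splitting into more than two components.
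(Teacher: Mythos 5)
Your proof is correct and takes essentially the same approach as the paper: apply \Cref{lem:blerbs} to the $2^n$ relabelled versions of the PUSO-based recognition algorithm, observe that each version tests exactly one globally antipodal pair, and conclude that this pair must certify its incident $i$-edges (both necessarily in $P$) to be in direct phase. The paper's own proof is just a terser statement of the same argument.
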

\begin{proof}
    Every version of the PUSO-based algorithm only checks one pair of antipodal vertices. By \Cref{lem:blerbs}, each of these pairs must certify their incident $i$-edges to be in direct phase (since ignoring direct-in-phase relationships of antipodal vertices must split apart $P$ by assumption). Furthermore, since this in-phaseness must be relevant for $P$, both of these edges must be part of $P$, and thus all $i$-edges must be in $P$.
\end{proof}
\fi

While we were able to slightly improve the runtime of computing phases, we show in the next section that this likely cannot be improved much further, since checking whether two given edges are in phase is \PSPACE-complete.

\section{\PSPACE-Completeness}\label{sec:completeness}

Checking whether two edges are in \emph{direct} phase in a USO is trivial, it can be achieved with just four evaluations of the outmap function and $O(n)$ additional time. Surprisingly, in this section we prove that testing whether two edges are in phase (not necessarily directly) is \PSPACE-complete. We first have to make the computational model more clear: Since a USO is a graph of exponential size (in the dimension $n$), the usual way of specifying a USO is by a \emph{succinct} representation, i.e., a Boolean circuit computing the outmap function with $n$ inputs and $n$ outputs and overall size polynomial in $n$. This reflects the practical situation very well, since in all current applications of USO sink-finding, the outmap function can be evaluated in time polynomial in $n$.
\begin{definition}
    The decision problem 2IP is to decide the following question:\\
Given a USO \orientation by a Boolean circuit of size in $O(poly(n))$ and two edges $\edgeA, \edgeB$, are \edgeA and \edgeB in phase?

\end{definition}

We first show that 2IP can be solved in polynomial space.
\begin{lemma}
2IP is in \PSPACE.
\end{lemma}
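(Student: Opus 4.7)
The plan is to exhibit a nondeterministic polynomial-space decision procedure, from which \PSPACE{} membership follows by Savitch's theorem.

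First I would check that the two input edges $e,e'$ are $i$-edges for the same dimension $i$; if not, \Cref{def:phases} forbids them from being in phase and the algorithm rejects. Otherwise, the question reduces to reachability from $e$ to $e'$ in the graph $G_i$ on vertex set $E_i$ whose edges encode the direct-in-phaseness relation. This graph has $|E_i| = 2^{n-1}$ vertices; each vertex is encoded by one of its endpoints together with the direction $i$, requiring only $O(n)$ bits. Direct-in-phaseness between two explicitly given $i$-edges $e_1,e_2$ can be verified in polynomial time using the circuit representation of $O$: evaluate the outmap at the (at most four) endpoints of $e_1 \cup e_2$ and check \Cref{eqn:dip} for each of the two possible opposing-side vertex pairs.

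The \NPSPACE{} procedure now simulates a path $e = e_0, e_1, \ldots$ in $G_i$ one step at a time, storing only the current edge $e_j$ together with a binary counter of width $n$. Each iteration nondeterministically guesses the next edge $e_{j+1}$, verifies that $e_j$ and $e_{j+1}$ are in direct phase using the subroutine above, overwrites $e_j$ by $e_{j+1}$, and increments the counter. It accepts as soon as the current edge equals $e'$, and rejects once the counter exceeds $2^{n-1}$, since any edge reachable from $e$ in $G_i$ is reachable within $|E_i|$ steps. The total workspace is polynomial in $n$ (plus the polynomial space used by a single circuit evaluation), so 2IP lies in \NPSPACE, and hence in \PSPACE{} by Savitch's theorem.

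There is no real obstacle: the argument is a textbook application of reachability in an implicitly specified graph of exponential size, and the only point requiring attention is to bound the witness sequence length by $|E_i|$ so that the counter fits in polynomial space.
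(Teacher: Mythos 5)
Your proposal is correct and follows essentially the same approach as the paper: nondeterministically guess a sequence of edges, each in direct phase with the previous one, verify each step via circuit evaluations, and conclude via Savitch's theorem. Your additional details (the step counter bounding the walk length by $|E_i|$ and the explicit polynomial-time check of direct-in-phaseness) are sensible refinements of the same argument.
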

\begin{proof}
We show that 2IP can be solved in polynomial space on a nondeterministic Turing machine, i.e., 2IP $\in\NPSPACE$. By Savitch's theorem \cite{Savitch}, \NPSPACE=\PSPACE.
We solve 2IP by starting at the edge $e$ and guessing a sequence of edges that are each in direct phase with the previous edge. If in this way we can reach $e'$, $e$ and $e'$ must be in phase. Such an algorithm only needs $O(n)$ bits to store the current and the next guessed edge of the sequence.
\end{proof}

Next, we show that 2IP is \PSPACE-hard. Since our reduction will only generate acyclic USOs, the problem remains \PSPACE-hard even under the promise that the input function specifies an acyclic USO. This restriction makes the theorem much more powerful, since testing whether this promise holds is itself \PSPACE-complete~\cite{gaertner2015recognizing}.
For our proof we reduce from the following (standard) \PSPACE-complete problem:
\begin{definition}
    The \emph{Quantified Boolean Formula} (QBF) is to decide the following: Given a formula $\Phi$ in conjunctive normal form on the variables $x_1,\ldots,x_n$, as well as a set of quantifiers $q_1,\ldots,q_n\in\{\exists,\forall\}$, decide whether the sentence $q_1x_1,\ldots,q_nx_n:\Phi(x_1,\ldots,x_n)$ is true.
\end{definition}

\begin{fact}
    QBF is \PSPACE-complete.
\end{fact}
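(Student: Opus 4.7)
The plan is to establish both membership and hardness.

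For \textbf{membership} in \PSPACE, I would evaluate the given sentence $q_1 x_1 \ldots q_n x_n \colon \Phi$ recursively: solve the two subproblems obtained by fixing $x_1 \in \{0,1\}$ and combine their answers with $\wedge$ or $\vee$ depending on whether $q_1$ is $\forall$ or $\exists$. The base case evaluates a variable-free formula in polynomial time. At each recursion level we need only the current partial assignment (one bit per variable) and a single Boolean flag per pending subcall, so the total space usage is $O(n + |\Phi|)$, which is polynomial.

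For \textbf{\PSPACE-hardness}, I would reduce from an arbitrary language $L \in \PSPACE$ decided by a deterministic Turing machine $M$ running in space $s(n)$ for some polynomial $s$. Configurations of $M$ on an input $w$ are encoded as bit strings of length $O(s(|w|))$. I would construct a formula $\mathrm{Reach}_k(C, C')$ expressing ``$C$ reaches $C'$ in at most $2^k$ steps.'' For $k = 0$, this directly encodes the one-step transition relation of $M$ as a polynomial-size CNF over $C$ and $C'$. The naive recursive definition $\mathrm{Reach}_k(C,C') \equiv \exists C'' \colon \mathrm{Reach}_{k-1}(C, C'') \wedge \mathrm{Reach}_{k-1}(C'', C')$ would double the formula size at each level, so I would instead use the standard Savitch trick: introduce universally quantified configurations $D, E$ together with a selector formula forcing $(D,E)$ to be either $(C, C'')$ or $(C'', C')$, and then require $\mathrm{Reach}_{k-1}(D, E)$, so that only one copy of the recursive subformula appears. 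Unrolling to $k = O(s(|w|))$ yields a prenex quantified formula of polynomial size; a final Tseitin conversion with fresh auxiliary variables quantified existentially at the innermost level puts the matrix into CNF while preserving equivalence and polynomial size. The resulting QBF asks whether the unique initial configuration reaches an accepting configuration, which holds iff $M$ accepts $w$.

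The \textbf{main obstacle} will be the hardness reduction's bookkeeping: writing the one-step transition relation of $M$ as a polynomial-size CNF, orchestrating the universal-quantifier trick so the recursion tree collapses linearly rather than exponentially, and carrying out the Tseitin conversion inside a prenex quantifier block without breaking the required CNF-matrix form of QBF.
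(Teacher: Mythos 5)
The paper states this as a known fact and gives no proof of its own; it is the classical theorem of Stockmeyer and Meyer, and your sketch is precisely the standard textbook argument, correctly assembled. Both halves are sound: the recursive evaluator for membership uses $O(n+\lvert\Phi\rvert)$ space, and the hardness reduction correctly uses the Savitch-style universal-quantifier trick (a single copy of $\mathrm{Reach}_{k-1}$ guarded by universally quantified configurations $D,E$ and a selector) to keep the formula polynomial-size, with Tseitin variables existentially quantified innermost to obtain the CNF matrix that the paper's definition of QBF requires while preserving the truth value of the sentence. No gaps; this matches what the paper implicitly relies on.
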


\begin{theorem}
2IP is \PSPACE-hard, even when the input is guaranteed to be an acyclic USO and $e$ and $e'$ are antipodal.
\end{theorem}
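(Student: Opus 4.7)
The plan is to reduce from QBF, which is \PSPACE-hard. At a conceptual level, 2IP asks for reachability in the graph on $E_i$ whose edges are the direct-in-phase pairs: two $i$-edges are in the same phase if and only if they lie in the same connected component of this graph. Since the USO is given succinctly by a poly-size circuit, this reachability graph has exponentially many vertices but is itself described by a poly-size circuit — precisely the setting in which $s$-$t$ reachability is \PSPACE-complete. The task is therefore to realise an arbitrary QBF-encoded reachability instance inside the direct-in-phase graph of an acyclic USO, with the designated edges $e$ and $e'$ antipodal.

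First I would convert an instance of QBF into the configuration graph of an alternating polynomial-space Turing machine that decides it, and from there into a standard succinct directed reachability instance on $\{0,1\}^m$ for $m = \text{poly}(n)$, with a poly-size circuit for the transition relation. Next I would design an acyclic ``backbone'' USO $\orientation$ on a cube of dimension polynomial in $n$, together with a distinguished dimension $i^*$ whose $i^*$-edges are indexed by machine configurations. The outmap of $\orientation$ is specified via a circuit that queries this transition relation: for each pair of $i^*$-edges whose endpoints lie at the relevant Hamming distance, the non-$i^*$ outmap coordinates are defined so that the two endpoints agree exactly in those coordinates when the corresponding configurations are linked by a transition, thereby witnessing direct-in-phaseness. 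Acyclicity would be enforced by making the overall orientation a controlled perturbation of the uniform USO, so that a fixed linear order on vertices remains a topological order.

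The main obstacle will be ensuring that the resulting orientation really is a USO — the \SWC must hold on \emph{every} face, not only on those carrying intended transitions. The natural way to handle this is a modular gadget construction: one small USO gadget per quantifier type (AND-gadget for $\forall$, OR-gadget for $\exists$) or per step of the transition relation, each verified by hand to be an acyclic USO realising the intended local direct-in-phase behaviour, glued together along the backbone dimension $i^*$ in a way that preserves both USO-ness and acyclicity. Placing the start and accept configurations at the two antipodal $i^*$-edges of the cube guarantees the antipodality of $e$ and $e'$ for free, so the remaining work is: designing the gadgets, proving they compose into an acyclic USO, and checking that the overall circuit size stays polynomial in $n$.
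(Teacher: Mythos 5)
There is a genuine gap: your proposal is a reduction \emph{plan} whose central step --- the gadget construction --- is left entirely open, and that step is the whole mathematical content of the proof. You correctly observe that 2IP is a succinct reachability question on the direct-in-phase graph (this is exactly why it lies in \PSPACE), but for hardness you propose to realise an \emph{arbitrary} succinct transition relation (coming from an alternating TM) as the direct-in-phase graph of an acyclic USO. Nothing in the proposal supports that this is possible. Direct-in-phaseness of two $i$-edges is witnessed by two endpoints whose outmaps agree on every coordinate of their spanned subcube except $i$; these agreement patterns are rigidly coupled across all vertex pairs by the \SWC, so you cannot freely dial in which pairs of $i$-edges are related and which are not. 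In particular, the harder half of any such reduction is proving \emph{non}-reachability (that $e$ and $e'$ are \emph{not} in phase when the instance is negative), which requires ruling out every unintended direct-in-phase witness globally --- your plan of ``defining the non-$i^*$ coordinates so that endpoints agree exactly when configurations are linked'' does not address this, and a single stray agreement anywhere in the cube can merge the two components.

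The paper avoids this obstacle by never encoding an arbitrary transition relation: it exploits the tree structure of QBF evaluation and builds the USO recursively, one quantifier per level, from two explicit $3$-dimensional base gadgets (true/false) and explicit $\forall$- and $\exists$-gadgets that implement AND and OR of the ``min-edge and max-edge are in phase'' predicate. Each gadget is obtained from a product of smaller USOs by flipping a few flippable edges, which makes USO-ness, acyclicity, and combedness easy to verify locally; and the ``only if'' direction (non-in-phaseness propagates) is proved by exhibiting a phase that is a matching and flipping it via the matching lemma (\Cref{lem:schurrforward}), separating the two distinguished edges. If you want to complete your proof, you should either supply and verify such gadgets (at which point you will have essentially reconstructed the paper's argument, and the TM detour buys you nothing), or justify why an arbitrary succinctly given graph embeds as a direct-in-phase graph --- which is a substantial open claim, not a routine step.
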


\begin{proof}
We reduce QBF to 2IP. To prove \PSPACE-hardness, this reduction must be polynomial-time and many-one. We translate a sentence $q_1x_1,\ldots,q_nx_n:\Phi(x_1,\ldots,x_n)$ into an acyclic USO $O_0[]$, built recursively from the USOs $O_{1}[0]$ and $O_{1}[1]$ which correspond to the sentences $q_2x_2,\ldots,q_{n}x_{n}:\Phi(0,x_2,\ldots,x_{n})$ and $q_2x_2,\ldots,q_{n}x_{n}:\Phi(1,x_2,\ldots,x_{n})$, respectively. In general, a USO $O_i[b_{1},\ldots,b_{i}]$ for $b_j\in\{0,1\}$ corresponds to the sentence $q_{i+1}x_{i+1},\ldots,q_nx_n:\Phi(b_{1},\ldots,b_i,x_{i+1},\ldots,x_n)$.

We show inductively that all of our orientations $O_i[b_{1},\ldots,b_i]$ fulfill the following invariants:
\begin{itemize}[noitemsep]
\item $O_i[b_{1},\ldots,b_i]$ is a USO.
\item $O_i[b_{1},\ldots,b_i]$ is acyclic.
\item $O_i[b_{1},\ldots,b_i]$ is combed downwards in dimension $1$.
\item The minimum vertex of $O_i[b_{1},\ldots,b_i]$ is its sink, the maximum vertex is its source.
\item In $O_i[b_{1},\ldots,b_i]$, the $1$-edges incident to the minimum and maximum vertices are in phase if and only if the sentence $q_{i+1}x_{i+1},\ldots,q_nx_n:\Phi(b_{1},\ldots,b_i,x_{i+1},\ldots,x_n)$ is true.
\end{itemize}
If we can show these properties, the only step left for the proof of the reduction is to show that a circuit computing $O_0[]$ can be computed in polynomial time.

We first begin by discussing the anchor of our recursive construction: The orientations $O_n[b_1,\ldots,b_n]$, which correspond to the (unquantified) sentences $\Phi(b_1,\ldots,b_n)$. The truth value of such an unquantified sentence can be efficiently tested (one simply needs to evaluate~$\Phi$ once), and we can thus set these orientations to be one of two fixed orientations: the \emph{true-} or the \emph{false-gadget}.

\proofsubparagraph{Base Gadgets}
\begin{figure}[htb]
\begin{subfigure}{0.49\textwidth}
    \ifdefined\arxiv
    \includegraphics{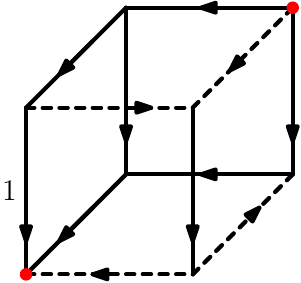}
    \else
    \includegraphics[scale=0.6]{img/pspace/true_base.pdf}
    \fi    \caption{The gadget encoding true.}
    \label{subfig:base_true}
\end{subfigure}
\begin{subfigure}{0.49\textwidth}
    \ifdefined\arxiv
    \includegraphics{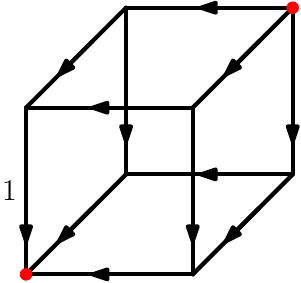}
    \else
    \includegraphics[scale=0.6]{img/pspace/false_base.pdf}
    \fi
    \caption{The gadget encoding false.}
    \label{subfig:base_false}
\end{subfigure}
\caption{The two base gadgets.}\label{fig:base_gadgets}
\end{figure}

The two base gadgets, the true- and the false-gadget, are the $3$-dimensional USOs shown in \Cref{fig:base_gadgets}. As can be seen, they are both acyclic USOs with sink and source at the minimum and maximum vertex, respectively, and combed downwards in dimension $1$. In the true-gadget, the minimum and maximum vertex of each $1$-facet are connected by a path of two edges (dashed) whose orientations are different in the upper and lower $1$-facets. Thus, along this path, the incident $1$-edges are always in direct phase. We can thus see that the $1$-edges incident to the minimum and maximum vertices must be in phase, as required. In contrast, in the false-gadget every $1$-edge is flippable (since the gadget is just a uniform USO), and thus the $1$-edges incident to the minimum and maximum vertices are not in phase.
We thus conclude that the base cases of our induction hold.

\proofsubparagraph{The \texorpdfstring{$\forall$}{``for all''} Quantifier}
We now show how we build a USO $\mathcal{O}:=O_i[b_{1},\ldots,b_i]$, if $q_{i+1}=\forall$. We first note that $q_{i+1}x_{i+1},\ldots,q_nx_n:\Phi(b_{1},\ldots,b_i,x_{i+1},\ldots,x_n)$ is true if and only if \emph{both} of the sentences $q_{i+2}x_{i+2},\ldots,q_nx_n:\Phi(b_{1},\ldots,b_i,B,x_{i+2},\ldots,x_n)$ for $B\in\{0,1\}$, i.e., the two sentences corresponding to $\mathcal{F}:=O_{i+1}[b_{1},\ldots,b_i,0]$ and $\mathcal{T}:=O_{i+1}[b_{1},\ldots,b_i,1]$, are true.

\begin{figure}
    \centering
    \ifdefined\arxiv
    \includegraphics{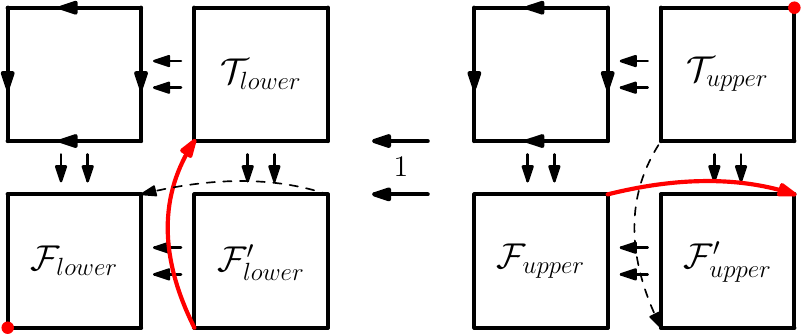}
    \else
    \includegraphics[scale=0.5]{img/pspace/forall_recursive.pdf}
    \fi
    \caption{The $\forall$ construction. The $1$-edges incident to the red vertices are only in phase if the minimum and maximum $1$-edges of both $\mathcal{F}$ and $\mathcal{T}$ are in phase.}
    \label{fig:for_all_gadget}
\end{figure}
We show how to build $\mathcal{O}$ from $\mathcal{T}$ and $\mathcal{F}$ in \Cref{fig:for_all_gadget}. Essentially, $\mathcal{O}$ consists of two copies of $\mathcal{F}$ ($\mathcal{F}$ and $\mathcal{F}'$), one copy of $\mathcal{T}$, and one uniform USO, all connected in a combed way, but then two specially marked red edges are flipped. Note that by the inductive hypothesis, we can assume both $\mathcal{F}$ and $\mathcal{T}$ to fulfill the invariants outlined above.

We first prove that $\mathcal{O}$ is a USO: Since all four ``ingredients'' are USOs, before flipping the red edges the orientation is clearly a USO (it can be seen as a product construction as described in~\cite{schurr2004quadraticbound}). We thus only have to show that the two red edges are flippable. The red edge in the top $1$-facet goes between the maximum vertices of $\mathcal{F}_{upper}$ and $\mathcal{F}'_{upper}$, which by inductive hypothesis are both sources of their respective subcubes. Thus, the two endpoints have the same outmap, and this red edge is flippable. The same argument works for the red edge in the bottom $1$-facet, which goes between two sinks. Thus, the orientation is a USO.

Next, we prove that the construction preserves acyclicity: We can view both $1$-facets independently, since the $1$-edges are combed and thus cannot be part of a cycle. In a similar way, we can split each $1$-facet further along some combed dimension. In the resulting subcubes, since the uniform USO, $\mathcal{F}$, and $\mathcal{T}$ are acyclic, any cycle must use one of the red edges. However, in these subcubes each red edge either ends at a sink or starts at a source, and can thus not be part of any directed cycle. Thus, $\mathcal{O}$ is acyclic.

Next, we want to point out that $\mathcal{O}$ is combed downwards in dimension $1$, and since the minimum vertex of $\mathcal{F}$ is a sink and the maximum vertex of $\mathcal{T}$ is a source, the sink and source of $\mathcal{O}$ are also located at the minimum and maximum vertex, respectively.

Finally, we need to show that the $1$-phases of $\mathcal{O}$ are correct. In other words, we wish to prove that the $1$-edges incident to the minimum and maximum vertices are in phase if and only if this holds for \emph{both} $\mathcal{F}$ and $\mathcal{T}$.\\
The ``if'' direction is easy to see, since we have a chain of in-phaseness: We can first go through $\mathcal{F}$, then cross over to the right (since the red and dashed edges go in the opposite direction, their incident $1$-edges are in phase), take the same path back through $\mathcal{F}'$, cross upwards along the red and dashed edges, and finally go through $\mathcal{T}$.\\
For the ``only if'' direction, we can assume that the $1$-edges incident to the minimum and maximum vertices are not in phase in either $\mathcal{F}$ or $\mathcal{T}$. Thus, there must be some phase $P$ in $\mathcal{F}$ that includes the $1$-edge incident to its source but not the one incident to its sink, or there exists a phase $P$ in $\mathcal{T}$ including the $1$-edge incident to its sink but not the one incident to its source. This phase $P$ forms a matching even when the two flippable red edges are added. Thus, by \Cref{lem:schurrforward}, we can flip $P$ also in $\mathcal{O}$. However, $P$ contains exactly one of the two $1$-edges incident to the minimum and maximum vertices of $\mathcal{O}$. Thus, these $1$-edges are not in phase.\\
Thus, we conclude that the $1$-edges incident to the minimum and maximum vertices are in phase if and only if this also held for $\mathcal{F}$ and $\mathcal{T}$.

\proofsubparagraph{The \texorpdfstring{$\exists$}{``exists''} Quantifier}
Now we show how we build a USO $\mathcal{O}:=O_i[b_{1},\ldots,b_i]$, if $q_{i+1}=\exists$. We again note that $q_{i+1}x_{i+1},\ldots,q_nx_n:\Phi(b_{1},\ldots,b_i,x_{i+1},\ldots,x_n)$ is true if and only if \emph{at least one} of the sentences $q_{i+2}x_{i+2},\ldots,q_nx_n:\Phi(b_{1},\ldots,b_i,B,x_{i+2},\ldots,x_n)$ for $B\in\{0,1\}$, i.e., the two sentences corresponding to $\mathcal{F}:=O_{i+1}[b_{1},\ldots,b_i,0]$ and $\mathcal{T}:=O_{i+1}[b_{1},\ldots,b_i,1]$, are true.

\begin{figure}[htb]
    \centering
    \ifdefined\arxiv
    \includegraphics{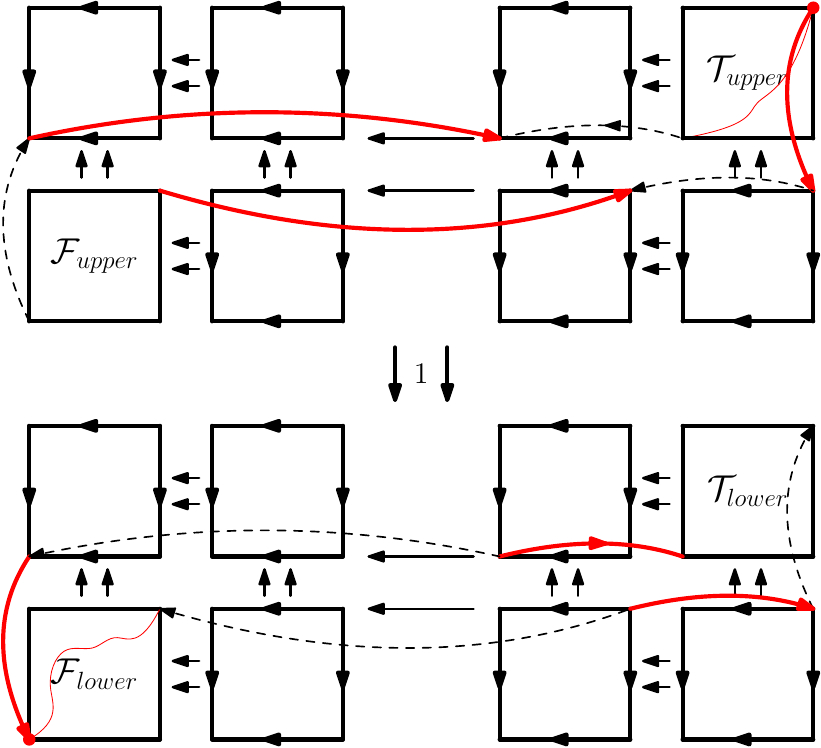}
    \else
    \includegraphics[scale=0.5]{img/pspace/exists_recursive.pdf}
    \fi
    \caption{The $\exists$ construction. The $1$-edges incident to the red vertices are in phase if and only if the minimum and maximum $1$-edges of either $\mathcal{F}$ or $\mathcal{T}$ are in phase.}
    \label{fig:exists_gadget}
\end{figure}

We show how to build $\mathcal{O}$ from $\mathcal{T}$ and $\mathcal{F}$ in \Cref{fig:exists_gadget}. Essentially, $\mathcal{O}$ consists of one copy of $\mathcal{F}$, one copy of $\mathcal{T}$, and six uniform USOs, all connected in a combed way, but then six specially marked red edges are flipped. Note that by the inductive hypothesis, we can again assume both $\mathcal{F}$ and $\mathcal{T}$ to fulfill the conditions outlined above.

We first prove that $\mathcal{O}$ is a USO: Similarly to the $\forall$ construction, we only need to show that all the red edges are flippable. Again, this follows from the location of the sources and sinks of $\mathcal{F}$ and $\mathcal{T}$. Furthermore, the red edges form a matching, so they can also all be flipped together.

Next, we see that the construction preserves acyclicity: We can again decompose $\mathcal{O}$ into subcubes along combed dimensions. In the remaining subcubes, we see that all red edges are incident to sinks or sources, and can thus not be used in directed cycles. It follows that $\mathcal{O}$ is acyclic.

It is again obvious to see that $\mathcal{O}$ is combed downwards in dimension $1$, and the global sink and source of $\mathcal{O}$ are located in the minimum and maximum vertex, respectively.

Finally, we show that the $1$-edges incident to the minimum and maximum vertices of $\mathcal{O}$ are in phase if and only if this holds for \emph{at least one of} $\mathcal{F}$ and $\mathcal{T}$.\\
The ``if'' direction is again simple, since we can use the in-phaseness sequence through $\mathcal{F}$ and the three lower red and dashed edge pairs, or the in-phaseness sequence through $\mathcal{T}$ and the upper three red and dashed edge pairs.\\
For the ``only if'' direction, we see that all $1$-edges outside of $\mathcal{F}$ and $\mathcal{T}$ are flippable, except the four that are adjacent to a red edge. We can check all pairs of remaining $1$-edges for possibly being in direct phase and verify that only the direct-in-phaseness relations induced by the red and dashed edge pairs and the relations inside of $\mathcal{F}$ and $\mathcal{T}$ are present. Thus, the minimum and maximum $1$-edges of $\mathcal{O}$ can only be in phase, if that holds for $\mathcal{F}$ or $\mathcal{T}$.

\proofsubparagraph{Final Arguments}
It only remains to prove that we can build a circuit computing the outmap function $O_0[]$ from the QBF instance in polynomial time. Based on the sequence $q_1,\ldots,q_n$ of quantifiers, we can easily assign the dimensions of $O_0[]$ to the different levels of the recursion (the first three coordinates belong to the base gadgets, and the following coordinates belong to levels in either groups of two or three coordinates, depending on whether $q_i=\exists$ or $q_i=\forall$). We can thus easily locate a given vertex $v$ within all levels of the recursive construction. If at some point the vertex is part of a uniform subcube, it does not need to be located on lower levels. Otherwise, the vertex is part of a base gadget $O_n[b_1,\ldots,b_n]$ on the last level of the recursion. Here, we can evaluate $\Phi(b_1,\ldots,b_n)$ (since CNF formulae can be efficiently evaluated by Boolean circuits), and find the orientation of $v$. 
Thus, $O_0[]$ can be evaluated by a polynomially sized circuit that we can also build in polynomial time, and our reduction is complete.
\end{proof}

\subsection{Implications}
The \PSPACE-hardness of 2IP implies that many closely related problems concerning phases are also hard, for example, computing the set of edges in the phase $P$ in which a given edge lies. More surprisingly, its hardness also implies that the problem of \emph{USO completion} is \PSPACE-hard. In the USO completion problem, one is given a partially oriented hypercube. This partial orientation is again encoded by a succinct circuit, and computes for each vertex $v$ its \emph{partial outmap} as a function $C:V(Q_n)\rightarrow\{0,1,-\}^n$ where $0$ and $1$ denote incoming and outgoing edges as usual, and ``$-$'' denotes that the edge is not oriented. The problem is then to decide whether there exists a USO $O$ that agrees with $C$ on all edges that were oriented in $C$. It is easy to obtain a reduction from 2IP to USO completion: In the dimension of the two input $h$-edges $e,e'$, we make all edges unoriented, except $e$ and $e'$, which are oriented in opposite directions. Clearly, if this partial orientation is completable, $e$ and $e'$ cannot be in phase. If this orientation is not completable, $e$ and $e'$ must be directed in the same way in all possible connections of the two $h$-facets, i.e., they must be in the same $h$-phase.
\section{Conclusion}

Since implementations of the USO Markov chain spend most of their time computing phases, our improvement from $O(4^n)$ to $O(3^n)$ vertex comparisons significantly speeds up the generation of random USOs in practice. It is conceivable that phases could be computed even faster, but $\Omega(n2^n)$ serves as a natural lower bound due to the number of edges of $\Cube$. The main open question in this area remains the mixing rate of the Markov chain, and we hope that some of our structural results may serve as new tools towards attacking this problem. Currently phases seem to be the only somewhat ``local'' rule to generate all USOs, but it may also prove useful to search for other operations which allow for efficient random sampling.

Some of our results %
\ifdefined\arxiv
(\Cref{obs:EveryPairOfVerticesCertifiesDIPOfAtMost2Edges}, \Cref{lem:partialHypervertex}, \Cref{lem:nonAdjacentEdgesAreNotAffectedByPhaseflips})
\else
(\Cref{lem:partialHypervertex}, \Cref{lem:nonAdjacentEdgesAreNotAffectedByPhaseflips})
\fi 
indicate that the phases or direct-in-phaseness relationships of one dimension contain information on the phases in other dimension. It is also not known how the sizes of the $i$-phases affect the number and sizes of phases in other dimensions, apart from the fact that there are at least $2n$ phases in total. Such interactions between phases of different dimensions call for further study, and might help with phase computation in the future; for example it may be possible to efficiently deduce the $i$-phases given all the $j$-phases for $j\neq i$.


\clearpage
\bibliography{literature,USO}

\ifdefined\arxiv
\else

\newpage
\appendix
\section{Omitted Proofs}
\subsection{\Cref{thm:AllHEdgesAreInPhase=>Hypervertex,lem:partialHypervertex}}
\begin{proof}[Proof of \Cref{thm:AllHEdgesAreInPhase=>Hypervertex}]
We prove this theorem by contradiction.
Assume $O$ is a USO with a dimension $i \in \dimOfFace$ such that $E(f)_i$ is an $i$-phase of $O$, 
but \face is \emph{not} a hypervertex.
Thus, there exists a dimension $j \in [n] \setminus \dimOfFace$ such that for some pair of vertices $\node, \nodeB \in \face$, the orientation of the connecting edges $\{\node, \node \neig j \}$ and  $\{\nodeB, \nodeB \neig j \}$ differs.

First, we switch our focus to the $n'$-dimensional face $f'$ that contains $f$ and for which $dim(f')=dim(f)\cup\{j\}$. Without loss of generality, we assume $f$ is the lower $j$-facet of $f'$.
Second, we adjust the orientation of the $i$-edges.
We let all $i$-edges in $E(\face)$ point upwards and all $i$-edges in $E(f') \setminus E(\face)$ point downwards. Since $E(f)_i$ is a phase of $O$, the resulting orientation $O'$ of $f'$ is a USO.

For all $\{v, w\}  \in E(\face)_i$ it holds that $O(v)_j = O(w)_j$, since otherwise the edge $\{v,w\}$ would be in direct phase with the edge $\{v\neig j,w\neig j\}$. Thus, the orientation of the $j$-edges splits $E(f)_i$ into two parts: 
\begin{align*}
E(\face)_i^+ &:= \{ \{v, w\}  \in E(\face)_i \;|\; O'(v)_j = O'(w)_j=0 \} \text{ and} \\
E(\face)_i^- &:=  \{ \{v, w\}  \in E(\face)_i \;|\; O'(v)_j = O'(w)_j = 1 \}.    
\end{align*}

As $E(\face)_i$ is a phase, there must be some edge $\edge^+\in E(\face)_i^+ $ which is in direct phase to some edge $\edge^-\in E(\face)_i^- $. We now perform a partial swap on $\orientation'$ in dimension $j$, yielding a USO $\orientation''$. 
The endpoints of $\edge^+$ are not impacted by this operation, but the endpoints of $\edge^-$ are moved to the opposite $j$-facet, now forming a new edge ${\edge'}^{-}$. The two edges $e^+$ and ${\edge'}^{-}$ must be in direct phase in $O''$, since in dimension $j$ all four of their endpoints are incident to an incoming edge, and in all the other dimensions the outmaps of the endpoints of ${\edge'}^{-}$ in $O''$ are the same as the outmaps of the endpoints of ${\edge}^-$ in $O'$.

For every pair of $i$-edges neighboring in dimension $j$, exactly one is upwards and one is downwards oriented. Let $P$ be the phase containing $\edge^+$ and ${\edge'}^-$. Since $P$ contains only upwards oriented $i$-edges, and since $P$ contains at least one edge of each $j$-facet, the subgraph of $\neighborhoodGraph_i$ induced by $P$ cannot be connected. See \Cref{fig:counterExample} for a sketch of $O'$ and $O''$.
This is a contradiction to \Cref{thm:connectedness}, which proves the lemma.
\end{proof}

\begin{figure}[hb]
    \centering
    \ifdefined\arxiv
    \includegraphics{img/proof_partialswap.pdf}
    \else
    \includegraphics[scale=0.6]{img/proof_partialswap.pdf}
    \fi
    \caption{Sketch of $O'$ and $O''$ from the proof of \Cref{thm:AllHEdgesAreInPhase=>Hypervertex}. The edges $e^+\in E(f)_i^+$ and $e^-\in E(f)_i^-$ are pulled apart by the partial swap. If they were in direct phase before they must still be in direct phase, however the phase connecting $e^+$ and $e'^-$ after the partial swap cannot be connected.}
    \label{fig:counterExample}
\end{figure}

\begin{proof}[Proof of \Cref{lem:partialHypervertex}]
The proof is exactly the same as for \Cref{thm:AllHEdgesAreInPhase=>Hypervertex}.
\end{proof}

\subsection{\Cref{lem:minimization2}}
We prove this lemma with a weaker minimization lemma:
\begin{lemma}[Weak Minimization Lemma]\label{lem:minimization1}
If there exists a counterexample $(\orientation^*,\Matching^*)$ to \cref{lem:schurrbackward}, then there also exists a counterexample $(\orientation,\Matching)$ with $dim(\orientation^*)=dim(\orientation)$ in which \Matching does \emph{not} contain a whole phase of \orientation.
\end{lemma}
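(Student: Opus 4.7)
The plan is to construct the desired counterexample by ``absorbing'' every complete phase of $\Matching^*$ into a single phase-flip applied to $\orientation^*$. Specifically, I would let $U$ be the union of all phases of $\orientation^*$ that are contained entirely in $\Matching^*$, and define $\orientation := \orientation^* \flip U$ and $\Matching := \Matching^* \setminus U$. Since $U$ is a union of phases of $\orientation^*$, \Cref{lem:schurrforward} makes $\orientation$ a USO of the same dimension as $\orientation^*$, and the identity $\orientation \flip \Matching = \orientation^* \flip \Matching^*$ ensures that $\orientation \flip \Matching$ is a USO as well. Moreover, $\Matching$ is still a matching, and it is non-empty since otherwise $\Matching^* = U$ would itself be a union of phases, contradicting the fact that $(\orientation^*,\Matching^*)$ is a counterexample.

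The main obstacle is showing that $\Matching$ does not contain any whole phase of the new USO $\orientation$; to get there I need to control how phases behave under the flip $\,\cdot\,\flip U$. The key intermediate step is the claim that $U$ itself is a union of phases in $\orientation$. To prove the claim, I would write $U$ as the disjoint union of the $\orientation^*$-phases $P_\alpha$ composing it. For a fixed $P_\alpha \subseteq E_i$, the complement $U \setminus P_\alpha = \bigcup_{\beta \neq \alpha} P_\beta$ is again a union of phases of $\orientation^*$, is disjoint from $P_\alpha$, and $P_\alpha \cup (U \setminus P_\alpha) = U \subseteq \Matching^*$ is a matching. \Cref{obs:flippingNonAdjecentEdgesDoesNotAffectPhases} therefore gives that $P_\alpha$ is a union of phases in $\orientation^* \flip (U \setminus P_\alpha)$; since $P_\alpha$ is then itself a union of $i$-phases of that USO and flipping a union of $i$-phases does not change the $i$-phases (the remark following \Cref{obs:schurrpropositionSingleDim}), flipping $P_\alpha$ once more preserves this property, so $P_\alpha$ remains a union of phases in $\orientation = \orientation^* \flip U$. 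Taking the union over $\alpha$ establishes the claim.

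Finally, I would derive the conclusion by contradiction. If $\Matching$ contained a whole phase $P$ of $\orientation$, then $P \subseteq \Matching^* \setminus U$ would be disjoint from $U$ and $P \cup U \subseteq \Matching^*$ would still be a matching, so applying \Cref{obs:flippingNonAdjecentEdgesDoesNotAffectPhases} in $\orientation$ with $H := U$ (a union of phases in $\orientation$ by the claim just proven) would show that $P$ is a union of phases of $\orientation \flip U = \orientation^*$. But then by the maximality in the definition of $U$ we would have $P \subseteq U$, contradicting $P \subseteq \Matching^* \setminus U$. Hence $\Matching$ contains no whole phase of $\orientation$, and $(\orientation,\Matching)$ is the counterexample required by the lemma.
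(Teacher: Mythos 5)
Your proof is correct and follows essentially the same route as the paper's: the same construction $U$, $\orientation := \orientation^*\flip U$, $\Matching := \Matching^*\setminus U$, the same key intermediate claim that $U$ remains a union of phases in $\orientation$, and the same final contradiction via \Cref{obs:flippingNonAdjecentEdgesDoesNotAffectPhases} and the maximality of $U$. The only (immaterial) difference is that you establish the intermediate claim by peeling off one phase $P_\alpha$ at a time (flipping $U\setminus P_\alpha$ first, then $P_\alpha$), whereas the paper successively flips the dimension-classes $U_i = U\cap E_i$; both rest on the same observation and are equally valid.
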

\begin{proof}
Let $U$ be the union of all phases \Phase of $\orientation^*$ that are fully contained in the matching, i.e., $\Phase \subseteq \Matching^*$. 
By \Cref{lem:schurrforward}, $\orientation:=\orientation^*\flip U$ is a USO. We denote by $\Matching$ the set $\Matching^*\setminus U$, which is a matching containing only incomplete sets of phases of $\orientation^*$.

We now argue that $(\orientation,\Matching)$ is a counterexample with the desired property. 
As $\Matching^*$ originally was not a union of phases, $\Matching\not=\emptyset$. Furthermore, $\orientation\flip \Matching$ is equal to $\orientation^*\flip\Matching^*$ and thus by assumption a USO. It remains to be proven that $\Matching$ is not a union of phases of $\orientation$, and in particular contains no phase completely. 

To do so, we first prove that $U$ is a union of phases in $\orientation$. One can see this by successively flipping in $\orientation^*$ the sets $U_i:=U\cap E_i$ which decompose $U$ into the edges of different dimensions. After flipping each set $U_{i}$, by \Cref{obs:flippingNonAdjecentEdgesDoesNotAffectPhases} all the other sets $U_{j}$ remain unions of phases. Furthermore, as flipping a union of $i$-phases does not change the set of $i$-phases, $U_{i}$ also remains a union of phases.

Now, by \Cref{obs:flippingNonAdjecentEdgesDoesNotAffectPhases}, any phase $\Phase \subseteq \Matching$ of $\orientation$ is a union of phases in $\orientation\flip U = \orientation^*$. But then, by definition of $U$, $P$ would have been included in $U$. Thus, we conclude that $\Matching$ does not contain any phase of $\orientation$.
\end{proof}

\begin{proof}[Proof of \Cref{lem:minimization2}]
    Let $(\orientation,\Matching)$ be a smallest-dimensional counterexample to \Cref{lem:schurrbackward} among all counterexamples $(\orientation^*,\Matching^*)$ where $\Matching^*$ contains no phase of $\orientation^*$. By \Cref{lem:minimization1}, at least one such counterexample must exist, thus $(\orientation,\Matching)$ is well-defined. Let $n$ be the dimension of $\orientation$.
    
    We now prove that $H\cap F$ is a union of phases in $F$ for all facets $F$: If this would not be the case for some $F$, then constraining \orientation and \Matching to \facet would yield a counterexample $(\orientation_\facet,\Matching_\facet)$ of \Cref{lem:schurrbackward} of dimension $n-1$. By applying \Cref{lem:minimization1}, this counterexample can also be turned into a $(n-1)$-dimensional counterexample $(\orientation_\facet',\Matching_\facet')$ such that $\Matching_\facet'$ contains no phase of $\orientation_\facet'$. This is a contradiction to the definition of $(\orientation,\Matching)$ as the smallest-dimensional counterexample with this property. We conclude that $(\orientation,\Matching)$ is a counterexample with the desired properties.
\end{proof}

\subsection{\Cref{lem:dschurr}}
\begin{proof}[Proof of \Cref{lem:dschurr}]
    We first see that every pair $v,w$ of antipodal vertices certifies their incident $n$-edges to be in direct phase, since for all $i<n$, $v_i\xor v_{i+1}=w_i\xor w_{i+1}$ and thus $S^n(v)_i=S^n(w)_i$.

    Next, we show that no $n$-edge is in direct phase with a non-antipodal $n$-edge in the opposite $1$-facet. Let $v$ be any vertex and $w$ a vertex such that $w_{1}\neq v_{1}$, $w_n\neq v_n$, but $v_i=w_i$ for some $i$. Let $i'$ be the minimum among all $i$ with $v_i=w_i$. Note that $i'>1$. Then, we have $v_{i'-1}\neq w_{i'-1}$, but we also have $S^n(v)_{i'-1}=v_{i'-1}\xor v_{i'} \neq w_{i'-1}\xor w_{i'}=S^n(w)_{i'-1}$, and thus the $n$-edges incident to $v$ and $w$ are not in direct phase.

    By a similar argument one can see that two vertices $v$ and $w$ certify their incident $n$-edges to be in direct phase if and only if there exists some integer $1<k\leq n$ such that $v_i=w_i$ for all $i<k$, and $v_i\neq w_i$ for all $i\geq k$. From this, it is easy to see that all $n$-edges are in phase: The $n$-edges in the upper $1$-facet are each in direct phase with some edge in the lower $1$-facet. The lower $1$-facet is structured in the same way as the cube $S^{n-1}$, thus we can inductively see that all $n$-edges in this facet are in phase. Therefore, all $n$-edges of $S^n$ are in phase.
\end{proof}
\fi

\end{document}